\documentclass[12pt]{amsart}

\usepackage{amsmath,amssymb,amsfonts,amsthm}
\usepackage{graphicx}
\usepackage{float}
\usepackage[all]{xy}
\usepackage{amscd,color}
\usepackage{url}
\usepackage{amscd}
\usepackage{texdraw}
\usepackage{subcaption}

\newtheorem{definition}{Definition}
\newtheorem{theorem}{Theorem}
\newtheorem{remark}{Remark}
\newtheorem*{starthm1}{Theorem A}
\newtheorem*{starthm2}{Theorem B}
\newtheorem{lemma}{Lemma}
\newtheorem{corollary}{Corollary}

\newcommand{\la}{\lambda}
\newcommand{\CC}{\mathbb C}
\newcommand{\RR}{\mathbb R}
\newcommand{\calf}{\mathcal F}

\newcommand{\calm}{\mathcal M}
\newcommand{\calpp}{\mathcal{PP}}
\newcommand{\calp}{\mathcal P}
\newcommand{\cals}{\mathcal S}
\newcommand{\ZZ}{\mathbb Z}
\newcommand{\cala}{\mathcal A}
\newcommand{\DD}{\Delta^{*}}
\newcommand{\cale}{\mathcal E}

\begin{document}

\title[Transversality]
{Transversality at Virtual Centers}

\author{Tao Chen}
\address[]{Department of Math matics, Engineering and Computer Science\\
Laguardia Community College, CUNY\\
31-10 Thomson Ave. Long Island City, NY 11101\\
 and\\
 The Ph.D. Program in Mathematics\\
 Graduate Center of the City University of New York;
 New York, NY 10016}
 \email[Chen]{tchen@lagcc.cuny.edu}

\author{Yunping Jiang}
\address[]{Department of Mathematics\\
 Queens College of the City University of New York\\
 Flushing, NY 11367-1597\\
 and\\
 The Ph.D. Program in Mathematics\\
 Graduate Center of the City University of New York;
 New York, NY 10016}
 \email[Jiang]{yunping.jiang@qc.cuny.edu}

\author{Linda Keen}
\address[]{
 The Ph.D. Program in Mathematics\\
 Graduate Center of the City University of New York;
 New York, NY 10016}
 \email[Keen]{linda.keenbrezin@gmail.com}

 \keywords{dynamically natural slide, shell component, virtual center, transversality, holomorphic motion, lift of holomorphic motion, transfer operator}
 
\subjclass[2020]{Primary 37F46, 37F10; Secondary 32A10, 37F44}

\thanks{The work is partially supported by PSC-CUNY awards, a PSC-CUNY enhanced award (66680-54), and Simons Foundation awards (523341 and 942077).}

%\date{} % Remove date unless needed

\begin{abstract}
 We consider the family of transcendental meromorphic functions with finitely many singular values, none of whose asymptotic values is infinity. We prove that transversality holds at any boundary point of a shell component in a dynamically natural slice of the parameter space at which an asymptotic value is a prepole. 
\end{abstract}

\maketitle

\section{Introduction}\label{intro}
We dedicate this paper to the memory of Bob Devaney whose work on transcendental dynamics has been an inspiration to all of us.

In parameter spaces of families of dynamical systems, there are regions in which the dynamics have essentially the same character.  The points not in these regions are called the ''bifurcation'' locus.  A fundamental problem is to understand how one type of dynamical behavior changes as parameters move through the bifurcation locus.   When a path in the parameter space crosses the bifurcation locus at a single point, it is often easy to understand the behavioral change.   For example,  it is often possible to understand how the entropy of the systems change as they undergo such a ``transversal'' path through the bifurcation locus.  It is therefore important to know where transversality can be guaranteed. 

 In this paper,  we consider transcendental meromorphic functions with finitely many singular values, none of whose asymptotic values is infinity.  Because the functions are meromorphic, there are special parameters in the bifurcation locus for which an asymptotic value is a pole.   We prove that the behavior of the dynamics as we move through these special parameters is transversal.

A tool in proving transversality is to find a ``holomorphic motion''  that tracks an attracting cycle the function $f_{\la}$ as $\la$ moves from a basepoint $\la_0$ in the parameter space.   To do this, one introduces a map called the  ``transfer operator'' that sends a tangent vector to the cycle onto its image under $f_{\la}$.

Before we begin, to set the stage, we recall three simple examples of one complex dimensional families in which each function has exactly two asymptotic values. They are examples of what we will define as {\em dynamically natural slices} of parameter space. 

\begin{itemize}
\item The tangent family: 
$$
\calf_{1}=\Big\{ f_{\lambda}(z) =\lambda \tan z, \quad \la\not=0\Big\}.
$$

\begin{figure}[ht]
     \centering
\centerline{\includegraphics[width=8in]{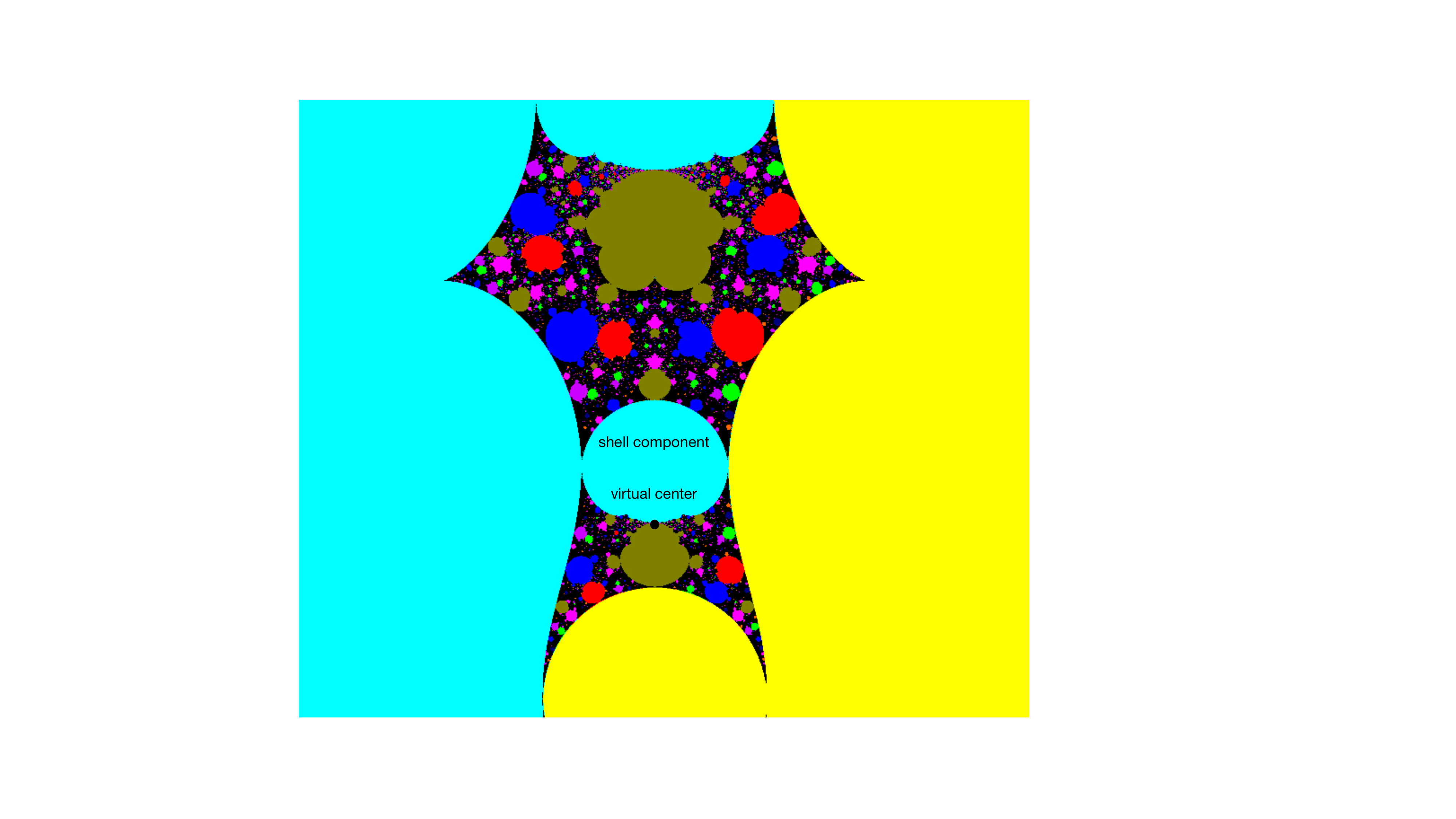}}
  \caption{The tangent family}
  \label{bif1}
\end{figure}

The functions have two symmetric asymptotic values, $\pm \lambda i$.
The bifurcation diagram for this family was studied in~\cite{CJKBif} (see Figure~\ref{bif1} adapted from~\cite{CJKBif}).

\item A family of functions with exactly two asymptotic values such that each function has an attracting fixed point at $0$:
$$
%\begin{equation}~\label{family}
\calf_{2} =\Big\{ f_{\la}(z) = \frac{e^z - e^{-z}}{\frac{e^z}{\la} - \frac{e^{-z}}{\mu}} \mbox{  where  } \frac{1}{\la} - \frac{1}{\mu}=\frac{2}{\rho}, \,0<|\rho|<1\Big\}.
%\end{equation}
$$
The  functions  have two  asymptotic values,  $\la$ and $\mu$,  and at least one of them is attracted to the  origin. The bifurcation diagram for this family was studied in~\cite{CJKNav} (see Figure~\ref{bif2} adapted from~\cite{CJKNav}).

\begin{figure}[ht]
     \centering
 \centerline{\includegraphics[width=8in]{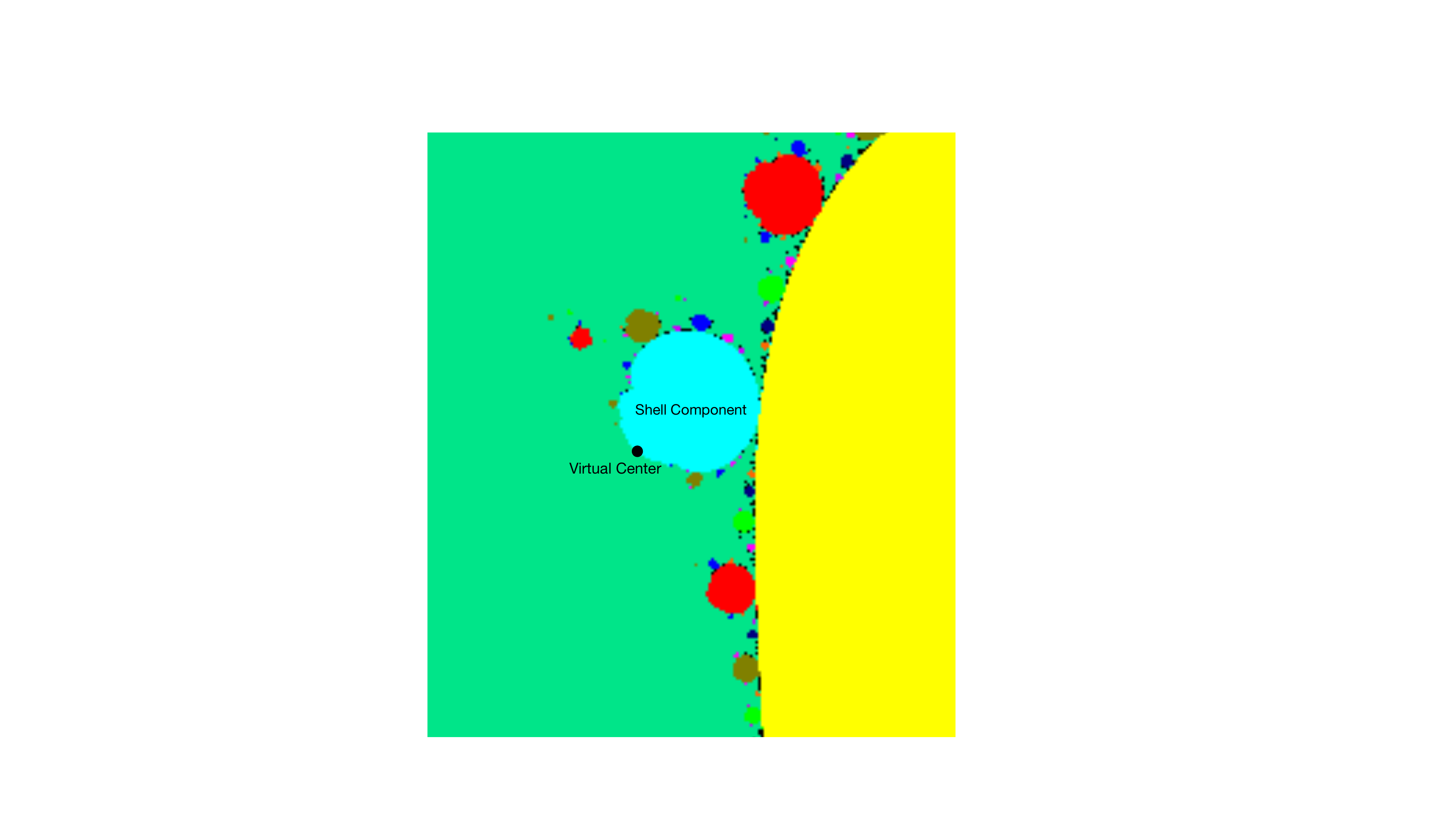}}
  \caption{A family with two asymptotic values}
  \label{bif2}
  %\vspace*{100pt}
\end{figure}

\pagebreak
\item The zero pole family: 
$$
\calf_{3}=\Big\{ f_{\lambda}(z) =\frac{\lambda}{1-e^{-2z}}, \quad \la\not=0\Big\}.
$$ 
The functions have two  asymptotic values, $0$ and $\lambda$; $0$ is always a pole of $f_\la$.
The bifurcation diagram for this family was studied in~\cite{CKZero} (see Figure~\ref{bif3} adapted from~\cite{CKZero}). 

\begin{figure}[ht]
     \centering
 \centerline{ \includegraphics[width=8in]{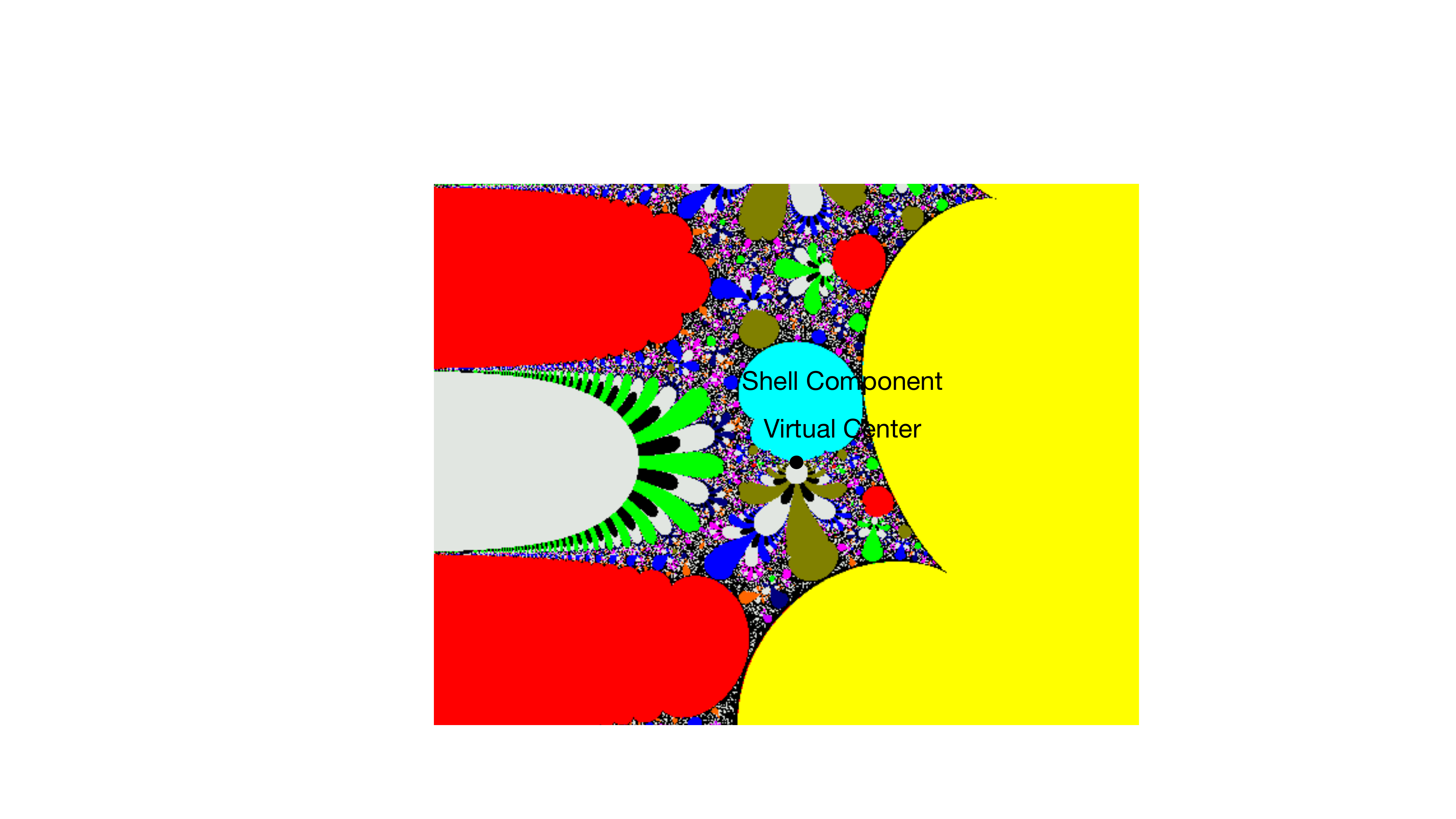}}
  \caption{The zero-pole family}
  \label{bif3}
\end{figure}
\end{itemize}

These figures show the bifurcation loci for the dynamics of three of these slices.  In the colored regions of the figures, the asymptotic values tend to one or two distinct attracting cycles with a non-zero multiplier.  Each region has a special boundary point where the multiplier of the attracting cycle has a limit of zero.   As we discuss below, we call these points ``virtual centers'' of the components.  

Note that all but the central yellow disk in Figure~1 and the green region in Figure~2 look like scallop shells, as do all the components in Figure~3.   These are called {\em shell components}. 

Let $\Lambda$ be  a dynamically natural slice of the parameter space of a meromorphic family of functions $\calm$ whose  singular points are all finite.  We prove that at the virtual centers of $\Lambda$, the transition between dynamically defined regions is locally non-degenerate.  In other words, transversality holds at every virtual center of every shell component of $\Lambda$.   The precise result is Theorem A below.  To get there, we need some notation and definitions.

We use $\lambda$ to denote the parameter in $\Lambda$.  In such a slice, the dynamic behavior of all but one of the singular points is essentially the same.  The dynamics of the remaining singular value $v_{\la}$ vary as $\la$ moves in the slice.  As $\lambda$ runs over a shell component of $\Lambda$, the maps $f_\lambda$ are topologically conjugate on their Fatou sets. There exists a periodic cycle
\[
O_\lambda=\{ z_{\la,0} ,z_{\la,1}, \ldots,z_{\lambda,m-1} \}
\]
that attracts the free asymptotic value $v_\lambda$ and each point in $O(\lambda)$ depends holomorphically on $\lambda$. We label the points of the cycle so that $v_\lambda$ is in the component of the attracting basin containing $z_2(\lambda)$. 
The multiplier
\[
\rho_\lambda=(f_\lambda^{m})'(z_{\lambda,0})
\]
defines a holomorphic universal covering map from the shell component onto the punctured unit disk. This map extends continuously to the boundary of the shell component. 

If one lifts a curve in the punctured disk that lands at the puncture, the lifted curve converges to a boundary point of the shell component. This boundary point is independent of the chosen curve and the multiplier of the corresponding attracting cycle tends to zero.  The point $z_0(\lambda)$ tends to a pole $p_{\la^{*}}$ of $f_{\la^{*}}$ so that the attracting cycle ceases to exist in the usual sense.  
  Nevertheless, the cycle admits a well-defined directional limit,
\[
O_\la^{*}=\{ z_{\la^{*},0}=p_{\la^{*}}, z_{\la^{*},1}=\infty, z_{\la^{*},2}=v_{\la^{*}}, \ldots,z_{\la^*,m-1 }\}.
\]
We call this cycle a {\em virtual cycle} and call $\la^*$  a \emph{virtual cycle parameter $\la^{*}$}.  In analogy with centers of hyperbolic components for rational maps where the corresponding attracting cycle has mulitplier $0$, we also call $\la^{*}$    a \emph{virtual center} of the  component.   We use these terms interchangeably.

We consider the function
$$
%\begin{equation}~\label{eq1}
\Phi(\lambda)=f_\lambda^{m-2}(v_\lambda).
%\end{equation}
$$
The equation 
$$
%\begin{equation}~\label{eq2}
\Phi(\lambda) -p_{\la^{*}}=0
%\end{equation}
$$
has a solution $\la^{*}$. Transversality means that $\la^{*}$ is a simple solution and the function $\Phi (\la)$ is locally univalent throughout the shell component.

Different shell components may meet at a given virtual center. Consequently, virtual cycle parameters are natural bifurcation parameters that mark transitions between regions with distinct dynamics. A fundamental question is whether this transition is locally regular; that is, whether it is transversal. 
%In this paper, we study this question for transcendental meromorphic dynamical systems. 

Our main result concerns the local behavior of the function $\Phi(\la)$ near a virtual center.  We show that, after extending $\Phi(\la)$ appropriately to the virtual cycle parameter, it remains injective and holomorphic in a neighborhood of that point;  that is, transversality holds at every virtual  cycle parameter. More precisely we prove, 

\medskip
\begin{starthm1}~\label{thm1}
 Let $\calf$ be a family of meromorphic functions with finitely many asymptotic values, none of which is infinity;   let $\Lambda= \{ \la \, | f_{\la} \in \calf \}$ be a dynamically natural slice of the parameter space;  denote the free asymptotic value of $f_{\la}$ by $v(\lambda)$.   Let $\Omega$ be a shell component in $\Lambda$ such that the attracting periodic cycle of $f_{\la}, \la \in \Omega$, has period $m$.  If the virtual center of $\Omega$ is $\la^{*}$, denote  the pole in the virtual cycle at $\la^{*}$ by $p_{\la^{*}}$. Then $\Phi'(\la^{*}) \neq 0$;  that is, $\Phi(\la)$ is {\em transversal} at the virtual center $\la^{*}$. 
\end{starthm1}

 The attracting periodic cycle of $f_\lambda$ inside a shell component extends continuously to the virtual cycle of $f_{\lambda^{*}}$ at the virtual center on the boundary. If another shell component (possibly of a different period) is attached at the same virtual center, then this theorem shows that the virtual center serves as a bifurcation parameter between the dynamics of the functions in the components.  That is,  this bifurcation is non-degenerate 
 %and locally monotonic 
 in parameter space. 
%
%In~\cite{CJKBif},  we studied this bifurcation for the tangent family $\lambda \tan z$  where we called it {\em period merging}.  In~\cite{CJKNav} we studied it for the family in example 2 above where we proved every virtual center is accessible both from inside the shell component and from inside the green component where both asymptotic values are attracted to the origin.  In~\cite{CK} we proved  that at  every virtual center infinitely many components meet and transversality holds.  

In order to prove Theorem~A we need to prove another theorem which is of independent interest.
Its  proof relies on the theory of holomorphic motions (see~\cite{GJW}). Let
\[
h(\la,z):\Delta_r\times P\to\widehat{\CC}
\]
be a holomorphic motion of the finite set $P$ consisting of the points of a virtual cycle, where $\Delta_r$ denotes the disk of radius $r>0$ centered at the origin. Using the dynamics of $f_\lambda$, we construct a sequence of lifted holomorphic motions
\[
h_k(\la,z),
\]
each compatible with the action of $f_\lambda$. Associated with these lifts is a natural transfer operator $A$ acting on the corresponding tangent spaces. The spectral properties of this operator play a central role in the proof of transversality.  (See~\cite{T}.)
Before proving Theorem A, we establish the following theorem.

\medskip
\begin{starthm2}~\label{thm2} 
Transversality holds at the virtual center $\la^{*}$ if and only if $1$ is not an eigenvalue of the transfer operator $A$.
 \end{starthm2}

 The paper is organized as follows. In Section 2, we introduce the singular values of a transcendental meromorphic function (Definition~\ref{singlepts}) and the notion of a virtual orbit of a pole (Definition~\ref{extorbit}). In Section 3, we define dynamically natural slices (Definition~\ref{dynnatslice}), shell components (Definition~\ref{shell}), virtual cycles (Definition~\ref{vc}), and virtual cycle parameters (Definition~\ref{vcp}). We also define the virtual centers of shell components and review their basic properties (Theorem~\ref{shellcomponents}).

Section 4 is devoted to the proof of our main result, Theorem A. In Subsection 4.1, we review the theory of holomorphic motions and introduce the notion of a lift of a holomorphic motion through a holomorphic family of meromorphic functions (Definition~\ref{liftdef}). We then define the transfer operator associated with this lifting procedure. In Subsection 4.2, we prove that the spectral radius of the transfer operator is at most $1$ (Lemma~\ref{sp}). In Subsection 4.3, we establish our second result, Theorem B, which shows that transversality is equivalent to the spectral radius of the transfer operator  being strictly less than $1$. Finally, in Subsection 4.5, we complete the proof of Theorem A.

\section{Families of meromorphic functions }\label{basic}
  In this paper, unless we specifically say otherwise, we always assume that an entire or meromorphic map $f$ is transcendental and so has infinite degree.  If we mean a map of finite degree, we call it a polynomial or a rational map.  We also always assume infinity is an essential singularity.   Except for subsection~\ref{dirlimits}, the material on the dynamics of meromorphic functions in this section is standard; see, for example,~\cite{Berg, Mil}.
  We need the following definitions:

\medskip
 \begin{definition}~\label{singlepts} A point $v\in\widehat{\CC}$ is called a {\em singular value}\footnote{In the literature these are sometimes called singularites of $f^{-1}$.} of $f$ if, for some small neighborhood of $v$, some branch of $f^{-1}$  is not well defined. We denote the set of singular points of $f$ by $\cals_f$.
 \end{definition}
 
If the derivative $f'(c)=0$, we call $c$ a critical point, and its image $v=f(c)$ a {\em critical value}. A critical value is singular since the branch with $f^{-1}(v)=c$ is not well defined.  If there is a path $\gamma(t)$ such that $\lim_{t \to \infty} \gamma(t)=\infty$ and $\lim_{t \to \infty} f(\gamma(t))=v$, then $v$ is singular and called an {\em asymptotic value} of $f$; $\gamma (t)$ is called an {\em asymptotic path.} 
Singular values may be critical, asymptotic, or accumulations of such points.    

If  an asymptotic value $v$ is isolated, and if $\gamma$ is an asymptotic path for $v$, we can find a nested sequence of neighborhoods of $v$, $D_{r,v}$ with $r\to 0$,  and a particular branch $g$ of $f^{-1}$ such that $V_{r,v}=g(D_{r,v}\setminus \{v\})$ is a nested sequence of neighborhoods containing segments of $\gamma$ and $\cap g(D_{r,v}\setminus \{v\}) = \emptyset$.  Then $r$ can be chosen small enough so that the map $f: V_{r,v} \rightarrow D_{r,v} \setminus \{v\}$ is a universal covering map.  In this case, $V_v=V_{r,v}$, for any choice of $r$,  is called an {\em asymptotic tract} for the asymptotic value $v$, and $v$ is called a {\em logarithmic singular value}.  The number of distinct asymptotic tracts of a given asymptotic value is called its {\em multiplicity}\footnote{Note this notion is not the same as the multiplicity of a critical point.}.

\medskip
 \begin{definition}\label{Julia} A ray $\beta$ approaching infinity is called a {\em Julia ray} or {\em Julia direction} for the meromorphic function $f$ if $f$ assumes all (but at most two) values infinitely often in any sector containing $\beta$.
 \end{definition}

An example to keep in mind is the tangent function $\tan z$ with asymptotic values $\pm i$, asymptotic tracts the upper and lower half planes, and Julia directions parallel to the positive and negative real axes.

In this paper, whenever we talk about the number of critical points and/or asymptotic values, we tacitly assume that we count with multiplicity.

An entire function $f$ always has an asymptotic value at infinity.  We can,  however,  use a M\"obius transformation to conjugate it to a transcendental meromorphic function.  Transcendental meromorphic functions with $\# \mathcal{S}_f < \infty$ are called functions of {\em finite type}. Because they have infinite degree,  such functions must have asymptotic values. Moreover, all the asymptotic values of a finite-type function are isolated and thus logarithmic.
 In this paper, we focus on the subclass of transcendental meromorphic functions $f: \CC\to  \widehat{\CC}$ of finite type such that infinity is  {\em not } an asymptotic value. This condition implies that the function has infinitely many poles. We denote this class $\calm$.  

\subsection{Dynamics of $f\in \calm$}\label{dynamics.} Let $f^n$ denote the $n^{th}$ iterate of $f$;  that is, $f^n(z)=f(f^{n-1})(z)$ for $n\geq 1$ and $f^{0}=Id$. Then $f^n$ is well-defined, except at  the poles and prepoles of $f$;  this countable set is defined as 
 \[ \calpp(f)= \{z \,| \, f^{n-1}(z) = \infty, \, \mbox{ for some } \, n= 2, 3, \ldots  .\} \]
 The points in $\calpp(f)$ have finite orbits that end at infinity.
 It will be convenient to denote the subset consisting of the poles ($n=2$) of $f$ by $\calp(f)$. If we are working with a fixed function $f$, we will abuse notation and write $\calpp$ and $\calp$.

 \subsection{Directional limits}\label{dirlimits}
  For $f\in \calm$, we can define extensions of the finite orbits of points in $\calpp(f)$ that depend on taking what are essentially directional limits.  Since the asymptotic values are counted by multiplicity, each of the asymptotic values $v$ has at least one and at most finitely many asymptotic tracts, $A_{i}(v)$.

 If $p \in \calp$, and $\gamma_i(t)$ is an asymptotic path in the asymptotic tract $A_i(v)$, let $\delta_{ij}(t)$ be one of connected components of $f^{-1}(\gamma_i(t))$ so that $\lim_{t \to \infty}\delta_{ij}(t)=p$.  
 
 \medskip
\begin{definition}~\label{dirlimit} 
With the above notation, we define the  {\em directional limit} as  
 \[  
 \lim_{t \to \infty}f^2(\delta_{ij}(t))= \lim_{t \to \infty} f(\gamma_i(t) ) = v,  
 \]  
or, if there is no confusion, more simply  as 
 $$
 f^{2} (p) =f(\infty) =v.
 $$
  \end{definition}
  
 \begin{remark} Note that the directional limit depends only on the asymptotic value and not on the choice of asymptotic tract or path. 
  \end{remark}
 In view of Definition~\ref{dirlimit}, the asymptotic values are directional limits of infinity.  This allows us to define what we call virtual orbits for every pole. 
   
 \medskip
 \begin{definition}\label{extorbit} If $v$ is an asymptotic value, the {\em virtual $v$-orbit of a pole $p$} is the set 
 $$
 %\begin{equation} \label{extorbiteqn} 
 \{p, \infty, v, f(v), f^2(v), \ldots \}. 
 $$
% \end{equation}
 \end{definition}

 \subsection{Classification}  The orbits of points under $f\in \calm$ form two complimentary completely invariant sets,  the \emph{Fatou set} and \emph{Julia set} of $f$. The {\em Fatou or stable set} is defined by
\[  F(f) = \{ z\in \CC \ | \ f^n \text{ is defined and normal in a neighborhood of } z\}.\]   It is open, but may be empty. The {\em Julia or unstable set} is its compliment, 
$$
J(f)=\widehat{\CC}\setminus F(f).
$$
Note that the point at infinity is always in the Julia set. Thus, the Julia set is a non-empty compact set in $\widehat{\CC}$.  

Since $\calp$ contains infinitely many points, by Picard's theorem, $\{f^n\}_{n=0}^{\infty}$ is normal on $\widehat{\CC}\setminus\overline{\calpp}$. Since 
$\{f^n\}_{n=0}^{\infty}$ is not normal on $\overline{\calpp}$, 
$$
J(f)=\overline{\calpp}.
$$

A point $z$ is called a periodic point of period $p\geq 1$ if $f^p(z)=z$ and $f^k(z)\neq z$ for any $0<k<p$. For such a point $z$, we have a cycle 
$ O =\{z_0=z, z_1=f(z), \ldots, z_{p-1}=f^{p-1}(z) \}$. The {\em multiplier} of the cycle is  
$$
\rho=\rho(O)=(f^p)'(z).
$$
The cycle $O$ is \emph{attracting} if $0<|\rho|<1$; \emph{super-attracting} if $\rho=0$; \emph{parabolic} (or rational neutral) if $\rho=e^{2\pi i \theta}$ for a rational number $\theta$; {\em irrational neutral} if $\rho=e^{2\pi i \theta}$ for an irrational number $\theta$ and \emph{repelling} if $|\rho|>1$. The Julia set contains all repelling periodic points. 

It was shown in~\cite{FK} that if an asymptotic value $v$ lands on a pole $p$, so that the $v$-virtual orbit of $p$ is periodic,  the directional limit of the multiplier is zero.   Such an orbit is called a {\em virtual cycle} (see Definition~\ref{vc} for a more precise definition). From a multiplier point of view,  a virtual center looks like a super-attracting periodic cycle in the rational case; however, the points in a virtual cycle belong to the Julia set, while the points in a super-attracting cycle belong to the Fatou set. Actually, a virtual center behaves more like a parabolic cycle in the rational case because  we can see a very rich bifurcation phenomena near a virtual center.      

If  $D$ is a component of the Fatou set, then $f(D)$ is either a component of the Fatou set or a component punctured at a single point. For the orbit of $D$ under $f$, there are  two possibilities:
\begin{itemize}
\item[i)]  there exist integers $m\neq n\geq 0$ such that $f^m(D)\subset f^n(D)$, and $D$ is called \emph{eventually periodic};
 \item[ii)]  for all $m\neq n$, $f^n(D)\cap f^m(D)=\emptyset$, and $D$ is called a \emph{wandering domain}.
\end{itemize}

Suppose that $\{D_0, \cdots, D_{p-1}\}$ is a periodic cycle of Fatou components, then either:
\begin{enumerate}
\item The cycle is (super)attracting: each $D_i$ contains a point of a periodic cycle with multiplier $|\rho|<1$ and  all points in each domain $D_i$ are  attracted to this cycle.
If $\rho=0$, the critical point itself belongs to the periodic cycle and the domain is called super-attracting.
\item The cycle is parabolic: the boundary of each $D_i$ contains a point of a periodic cycle with multiplier $\rho=e^{2\pi i q/m}$, $(q,m)=1$, $m$ a divisor of $p$,  and all points in each domain $D_i$ are
attracted to this cycle.
\item  The components of the cycle are  Siegel disks:  that is, each $D_i$ contains a point of a periodic cycle with multiplier $\rho=e^{2\pi i \theta}$,  where $\theta$ is irrational, and there is a holomorphic homeomorphism
mapping each $D_i$ to the unit disk $\Delta$, and conjugating the first return map
$f^p$ on $D_i$ to an irrational rotation of $\Delta$. The preimages under this conjugacy
of the circles $|\xi|=r, r<1$, foliate the disks $D_i$ with $f^p$ forward invariant
leaves on which $f^p$ is injective.  
\item  The components of the cycle are Herman rings: each $D_i$ is holomorphically homeomorphic to a standard
annulus and the first return map is conjugate to an irrational rotation of
the annulus by a holomorphic homeomorphism. The preimages under this conjugacy of the circles $|\xi|=r, 1<r<R$, foliate the disks with $f^p$ forward invariant leaves on which $f^p$ is injective. 
\item  $D_i$ is an essentially parabolic (Baker) domain: the boundary of each $D_i$
contains a point $z_i$ (possibly $\infty$) and  $f^{np}(z)\to z_i$ for all $z\in D_i$, but $f^p$ is not
holomorphic at $z_i$. If $p=1$, then $z_0=\infty$.
\end{enumerate}

\medskip
\begin{definition} 
The {\em post-singular set} of $f$ is the closure of the orbits of the singular values plus infinity, that is,
\[ 
P\cals_f=\overline{\cup_{n=0}^{\infty} f^n(\cals_f)}\cup \{\infty\}.  \]
For notational simplicity, if a pre-pole $s\in \calpp$ is a singular value, $\cup_{n=0}^{\infty} f^n(s)$ means the finite set $\cup_{n=0}^{N} f^n(s)$ with $f^{N+1} (s)=\infty$.
\end{definition}

Each non-repelling periodic cycle is associated to some singular point.  
In particular we have, see e.g.~\cite[chap 8-11]{Mil} or~\cite[Sect.4.3]{Berg}.

\medskip
\begin{theorem} If $\{D_0, \cdots, D_{p-1}\}$ is an attracting, superattracting, parabolic, or Baker periodic cycle of Fatou components, then for some $i =0, 1, \ldots, p-1$, $D_i$ contains a singular value. If $\{D_0, \cdots, D_{p-1}\}$  is a cycle of rotation domains (Siegel disks or Herman rings), the boundary of each $D_i$ contains the accumulation set of some singular value.
\end{theorem}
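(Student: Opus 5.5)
The plan is to treat each case by contradiction: assume the relevant singular values are absent, and show that suitable branches of $f^{-1}$ along the cycle are then well defined and univalent. Rigidity (Schwarz--Pick, normal families) then contradicts the assumed dynamics. These are the classical Fatou arguments, as in \cite[chap 8-11]{Mil}. Only local properties of $f$ near the cycle are used, plus one global fact: if a domain $V$ contains no point of $\cals_f$, then $f$ restricted to any component of $f^{-1}(V)$ is a covering onto $V$. This fact holds for $f\in\calm$ by Definition~\ref{singlepts}.

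\emph{Attracting and superattracting cycles.} If $\rho=0$, some point $z_i$ of the cycle is critical, so $f(z_i)\in D_{i+1}$ is a critical value, hence a singular value, and we are done. If $0<|\rho|<1$ and no $D_i$ contains a singular value, then $f:D_{i-1}\to D_i$ is a covering for every $i$, so $f^p:D_0\to D_0$ is a covering. Since $J(f)=\overline{\calpp}$ is infinite, $D_0$ is hyperbolic. A holomorphic covering of a hyperbolic domain onto itself is a local isometry of the hyperbolic metric. In particular the hyperbolic norm of $(f^p)'(z_0)$ equals $1$, that is, $|\rho|=1$, which is a contradiction.

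\emph{Parabolic and Baker cycles.} Assume again that no $D_i$ contains a singular value. The covering argument still gives that $f^p:D_0\to D_0$ is a covering, hence lifts to an automorphism $T$ of the unit disk. Take a small disk $B\Subset D_0$ with $f^{np}(B)\to z_0\in\partial D_0$. Since $D_0$ contains no singular values, the branches $g_n$ of $f^{-np}$ sending $f^{np}(B)$ back to $B$ are univalent, and they extend univalently along the orbit. I would then argue in one of two ways.
\begin{itemize}
\item In the parabolic case, use Fatou coordinates on an attracting petal $P$. Here $f^p$ is conjugate to $w\mapsto w+1$, and the branches $g_n$, restricted to a fixed disk in $P$, form a normal family of univalent maps. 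Their limits would have to be constant, since $f^{np}\to z_0$ locally uniformly on $D_0$. But univalence together with $g_n\circ f^{np}=\mathrm{id}$ on $B$ forbids constant limits.
\item In the Baker case, use the same normality of $\{g_n\}$ combined with the fact that $T$ is an automorphism. Hyperbolic distances $d_{D_0}(f^{np}(a),f^{np}(b))$ for $a,b\in B$ are then bounded below, which contradicts the shrinking of Euclidean distances as the orbit tends to the boundary point $z_0$.
\end{itemize}
I expect this case to be the main technical obstacle, because the Baker case has no local normal form at $z_0$. There I would instead invoke \cite[Sect.4.3]{Berg} for the required distance estimate.

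\emph{Rotation domains.} Suppose some $\partial D_i$ is disjoint from the accumulation set of the singular orbits. Then the closure of the postsingular set meets a neighbourhood $N$ of $\partial D_i$ in only finitely many points, and these lie off $\partial D_i$; shrinking $N$ keeps $N\setminus\partial D_i$ free of postsingular points near the boundary. Take an invariant leaf (a circle $|\xi|=r$ pulled back by the conjugacy) close to $\partial D_i$. The inverse branches of $f^{np}$ that act as the inverse rotation on $D_i$ are then defined and univalent on a fixed neighbourhood of a boundary point $w\in\partial D_i$, because no singular value obstructs analytic continuation there. Their images avoid $\overline{\calpp}$, so the family is normal by Montel's theorem. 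Normality of the inverse branches transfers to normality of $\{f^{np}\}$ near $w$, which yields a Fatou neighbourhood of $w$. This contradicts $w\in\partial D_i\subset J(f)$.
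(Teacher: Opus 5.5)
The paper gives no proof of this statement; it only cites Milnor and Bergweiler, so the comparison is with the classical Fatou arguments. Your (super)attracting case is correct and is the standard proof: with no singular value in the cycle, $f^p\colon D_0\to D_0$ is a covering, hence a local hyperbolic isometry, which forces $|\rho|=1$. The Baker case, however, fails. A hyperbolic distance bounded below does not conflict with shrinking Euclidean distances near a boundary point, because the hyperbolic density blows up there. For example, the automorphism $z\mapsto z/2$ of the right half-plane sends every orbit to the boundary point $0$, while hyperbolic distances along orbits stay constant. More fundamentally, your argument never uses that $\cals_f$ is finite, and without such a hypothesis the conclusion is false. Bergweiler (1995) showed that the entire function $2-\log 2+2z-e^{z}$ has an invariant Baker domain containing no singular value. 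Milnor's chapters treat only rational maps, and the classical references contain no Baker-domain version of the statement, so citing Sect.~4.3 will not close this case. You would need to show that such cycles cannot occur for $f\in\calm$, or give an argument that genuinely uses finite type.

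In the parabolic case the last step also fails. The identity $g_n\circ f^{np}=\mathrm{id}$ on $B$ involves the values of $g_n$ on the sets $f^{np}(B)$, which shrink to $z_0$ and leave every compact subset of the petal. It therefore says nothing about locally uniform limits of $g_n$ on a fixed disk, and univalent maps can converge to a constant. The standard repair uses the branches to continue the inverse Fatou coordinate. Take $g_n$ to be the branch of $f^{-np}$ on the whole petal inverting $f^{np}$ there; it exists because $f^{np}\colon D_0\to D_0$ is a covering. Then $\psi(w)=g_n(\Phi^{-1}(w+n))$ is independent of $n$ where defined, and gives a nonconstant entire map $\psi\colon\CC\to D_0$. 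This is impossible, since $D_0$ omits the infinite set $J(f)$.

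In the rotation case there are three problems:
\begin{itemize}
\item The images of the inverse branches do not avoid $\overline{\calpp}=J(f)$; they contain $g_n(w)\in J(f)$. Normality comes instead from their omitting $P\cals_f$, which has at least three points for $f\in\calm$ (two finite singular values and $\infty$).
\item Normality of the inverse branches does not by itself ``transfer'' to normality of $\{f^{np}\}$. You need a subsequence $n_k$ with $f^{n_kp}|_{D_i}\to\mathrm{id}$. The branches inverting the rotation then converge to the identity on a component of $N\cap D_i$, hence on all of $N$ by normality and the identity theorem. So for large $k$, $f^{n_kp}$ is defined on a fixed disk about $w$. This is absurd, because that disk contains prepoles.
\item Your claim that the finitely many nearby postsingular points lie off $\partial D_i$ is unjustified. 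Instead, run the argument at a single point $w\in\partial D_i\setminus\overline{P\cals_f}$. This gives $\partial D_i\subset\overline{P\cals_f}$, which is the intended content of the statement.
\end{itemize}
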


We know that when $\#(\cals_f)<\infty$, $f$ may have Baker domains but cannot have any wandering domains (see~\cite{Berg,BKL4,DK}).

% \medskip
% \begin{theorem}~\label{NoWD}
% If $\cals_f$ is finite, then there are no wandering domains in the Fatou set.
% \end{theorem}

\section{Dynamically Natural slices of Parameter spaces}\label{sec:dynamically natural}

\subsection{Holomorphic Families}

We now turn from discussing the dynamics of a particular function $f$ in $\calm$ to a discussion of what happens to the dynamics as we perturb $f$.  The definitions below apply to families of functions over a general complex manifold $X$.  Since the family $\calm$ depends holomorphically on finitely many parameters, \cite{DK},  it can be thought of as a complex manifold. We state the definitions below using the notation  $\calm$ for the complex manifold and    denote the points in $\calm$ by $\la$.

\medskip
 \begin{definition}[Holomorphic family]
A {\em holomorphic family} $\calf$ of meromorphic maps $f$ over a complex manifold $\calm$ is a map
$f(\la,z): \calm \times \CC \rightarrow \hat\CC$, such that  for all $\la \in \calm$,  $\la  \mapsto f_\la (z)$ is holomorphic for all $z\in \CC$.
\end{definition}

The following equivalence can be found in~\cite{McM} for rational functions and in~\cite{KK1} for transcendental meromorphic functions.

\medskip
\begin{theorem} \label{jstable}
Let $\calf$ be a holomorphic family of meromorphic maps with finitely many singularities, over a complex manifold $\calm$. Suppose $\la_0\in \calm$. Then the following are equivalent.
\begin{itemize}
\item[a)] The number of attracting cycles of $f_\la$ is locally constant in a neighborhood of  $\la_0$.
\item[b)] If  for $i=1, \ldots,N$, $s_i(\la)$  are holomorphic maps parameterizing the singular values of $f_\la$, then the  functions $\la \mapsto f_\la^n(s_i(\la))$ form a normal family on a neighborhood of $\la_0$.
\end{itemize}
\end{theorem}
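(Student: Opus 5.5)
Since this equivalence is quoted from \cite{McM} and \cite{KK1}, I would follow McMullen's argument for rational maps and check each step survives for finite-type meromorphic maps. Throughout, restrict to a small connected neighborhood $U$ of $\la_0$ on which the singular values $s_1(\la),\dots,s_N(\la)$ are holomorphic. Two standard facts are used:
\begin{itemize}
\item[(i)] A cycle of multiplier $\ne 1$ persists holomorphically under perturbation, by the implicit function theorem applied to $f_\la^p(z)-z$.
\item[(ii)] Every attracting cycle attracts some singular value, by the theorem on periodic Fatou components recalled above.
\end{itemize}

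\textbf{(b) $\Rightarrow$ (a).} Suppose the number of attracting cycles is not locally constant at $\la_0$. By (i), attracting cycles cannot disappear without their multiplier reaching the unit circle. So there is a holomorphically moving cycle $z(\la)$ of period $p$ with multiplier $\rho(\la)$, defined on an open set $V\subset U$ accumulating at $\la_0$, with $|\rho|<1$ on an open set $V_1\subset V$ and $|\rho|>1$ on an open set $V_2\subset V$. The multiplier is nonconstant, so $|\rho|=1$ cannot hold on an open set.

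By (ii) and the finiteness of $\cals_f$, some fixed $s_i$ is attracted to $z(\la)$ for $\la$ in an open subset of $V_1$. Normality gives a subsequence $f_\la^{n_kp+j}(s_i(\la))\to g(\la)$ locally uniformly on $V$. On that open subset of $V_1$ we have $g\equiv z(\la)$, so by the identity theorem $g\equiv z(\la)$ on all of $V$, including $V_2$.

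This gives a contradiction on $V_2$. Near a repelling point, $f^p_\la$ expands, so an orbit can stay close to $z(\la)$ for many iterates only if it lands exactly on it. That would force $f^{n}_\la(s_i(\la))=z(\la)$ on an open set, hence everywhere on $V$, which is impossible on $V_1$, where $s_i$ is attracted but not preperiodic. Rather than relying on this landing argument, one can phrase the contradiction directly: the derivatives in $\la$ of $f^{n_kp}_\la(s_i(\la))$ blow up near $z(\la)$ where $|\rho|>1$. Either version contradicts equicontinuity.

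\textbf{(a) $\Rightarrow$ (b).} Assume the count is constant on $U$. Then no cycle changes from attracting to non-attracting in $U$. Repelling cycles can only be created or destroyed through multiplier $1$, and by (a) such events are isolated away from an open dense set. This lets me build a holomorphic motion of the set of repelling periodic points, and of the poles and prepoles $\calpp(f_\la)$. The poles move holomorphically because $\infty$ is not an asymptotic value, and since $J=\overline{\calpp}$ the $\la$-lemma extends the motion to $J(f_\la)$.

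Choose three distinct holomorphically moving poles $a(\la),b(\la),c(\la)$. Then either $f^n_\la(s_i(\la))$ omits their graphs for all $n$, and normality follows from Montel's theorem applied after the M\"obius normalization sending $a,b,c$ to $0,1,\infty$, or some iterate meets a graph. In the second case I perturb $\la$ to make $s_i$ land on a repelling point that is preperiodic to itself, and then produce nearby parameters with a new attracting cycle. This is the usual ``Misiurewicz $\Rightarrow$ nearby superattracting or attracting parameters'' argument, and it contradicts (a).

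\textbf{Main obstacle.} The last step is where I expect the difficulty. For rational maps one makes a critical value periodic to get a superattracting cycle. Here $s_i$ may be an asymptotic value, whose periodicity gives no control on the multiplier. I would instead use the ``shell component'' mechanism: make $f^{n}_\la(s_i)$ land on a pole whose virtual orbit returns to $s_i$, i.e.\ produce a virtual cycle. By \cite{FK} the multiplier tends to $0$ along directional limits, so genuinely attracting cycles appear at nearby parameters. Justifying this use of virtual cycles in a general holomorphic family is the step I would check most carefully, following \cite{KK1}.
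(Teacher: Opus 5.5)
The paper gives no proof of this theorem; it quotes it from McMullen and Keen--Kotus. Your sketch therefore has to stand on its own, and it has a genuine gap exactly where you flag it.

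\textbf{The missing step in (a)$\Rightarrow$(b).} Your Montel step is sound once it is phrased with \emph{all} poles. If no iterate $f^n_\lambda(s_i(\lambda))$ ever meets a pole over $U$, the iterates omit $\infty$ and two moving poles, so they are normal. If $f^k_{\lambda_1}(s_i(\lambda_1))$ is a pole (non-persistently), then $f^{k+2}_\lambda(s_i(\lambda))$ has an essential singularity at $\lambda_1$. So the whole direction reduces to one claim: such a landing produces, at parameters arbitrarily close to $\lambda_1$, attracting cycles that are not continuations of those of $f_{\lambda_1}$. You do not prove this, and the tools you name do not give it:
\begin{itemize}
\item The Misiurewicz detour does not help. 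You already have the pole landing, and, as you note, landing an asymptotic value on a repelling point gives no control on the multiplier.
\item The quoted fact that the multiplier tends to $0$ at a virtual center concerns an already existing shell component, so it presupposes the cycles you need to create.
\item For a critical value, the virtual orbit $p\to\infty\to v$ goes to an asymptotic value, not back to $s_i$.
\end{itemize}
What is needed is the following.
\begin{itemize}
\item If $s_i$ is an asymptotic value, $\lambda\mapsto f^{k+1}_\lambda(s_i(\lambda))$ covers a neighborhood of $\infty$. So for suitable $\lambda$ it lies deep in the logarithmic tract of $s_i$, where $|f_\lambda-s_i|$ decays faster than any power of $1/|z|$ and beats the $|z|^2$ expansion near the pole. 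Then $f^{k+2}_\lambda$ maps a small disk about $s_i(\lambda)$ compactly into itself, and the Schwarz lemma gives an attracting cycle of period $k+2$.
\item If $s_i=f_\lambda(c(\lambda))$ is a critical value, apply Rouch\'e to $1/f^{k+1}_\lambda(s_i(\lambda))-1/q(\lambda)$, with $q(\lambda)$ a large preimage of $c(\lambda)$. This gives parameters where $c(\lambda)$ is superattracting periodic.
\end{itemize}
In both cases the new cycle passes near $\infty$, so the count of attracting cycles jumps near $\lambda_1$, contradicting (a).

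\textbf{False claims about cycles, affecting (b)$\Rightarrow$(a).} You assert that attracting cycles cannot disappear without $|\rho|$ reaching $1$, and that repelling cycles are born or die only at multiplier $1$. Both ignore degeneration of a cycle through a pole to $\infty$. The first is false in exactly the situation of this paper: at a virtual center the attracting cycle disappears while $\rho\to 0$. So in (b)$\Rightarrow$(a) the existence of $V_2$ is unjustified. The fix is to use normality first:
\begin{itemize}
\item The full sequences $f^{np+j}_\lambda(s_i(\lambda))$ converge, since every subsequential limit agrees with $z_j$ on $V_1$. The limits are meromorphic $g_j$ on the whole neighborhood, with $g_{j+1}=f_\lambda\circ g_j$.
\item No $g_j$ can take the value $\infty$. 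Near such a parameter, $f_\lambda\circ g_j$ would have both poles and preimages of a non-exceptional value accumulating, which a meromorphic $g_{j+1}$ cannot.
\item This continues the cycle and its multiplier holomorphically over the neighborhood. Your landing argument then finishes, provided you fix a landing time by Baire category and choose $s_i$ in the immediate basin whose orbit never hits the cycle (possible by the Koenigs extension argument).
\end{itemize}

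\textbf{Smaller points.} Your ``derivative blow-up'' alternative is wrong: locally uniform convergence of holomorphic functions forces convergence of derivatives. In (a)$\Rightarrow$(b), the paragraph on moving prepoles is circular. Prepoles move holomorphically over $U$ only if no singular orbit meets one there, which is what you are trying to prove, and even isolated collisions would destroy injectivity. Fortunately that paragraph is unnecessary, since the Montel step uses only three poles.
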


\medskip
\begin{definition}[$J-$stability]
A parameter $\la_0\in \calm$ is a {\em $J$-stable parameter }for the family $\calf$ if it satisfies either of the above conditions.
\end{definition}

In essence, at the $J$-stable parameters of a family, any dynamical behavior, for example, entropy, is a local constant. On the complementary set, the {\em bifurcation locus of the family}, the dynamic behavior,  bifurcates in the neighborhood of every point. There can be points $\la_0$ in the bifurcation locus where only some of the singular values bifurcate. That is, for some but not all values of $i$, the families $\{g_n^i(\la):= f_\la^n(s_i(\la))\}$ are normal in a neighborhood of $\la_0$. If for some $i$, $\{g_n^i(\la)\}$ is not normal in a neighborhood of $\la_0$, then $\la_0$ is said to be in the bifurcation locus of $s_i(\la)$.

\subsection{Dynamically Natural slices}\label{subsec:dynamically natural}
 
 In \cite{FK}, {\em dynamically natural} complex one-dimensional slices of a holomorphic family in $\calm$ were studied. In these slices,  roughly speaking, all the dynamic phenomena but one are fixed, and the last is determined by a ``free asymptotic value''. The components of the complement of the bifurcation locus in these slices contain  $J$-stable parameters. The full definition is as follows (see~\cite{FK}).

\medskip
 \begin{definition}\label{dynnatslice}
Let $\calf=\{ f_{\la} \}_{\la\in \Lambda}$ be a family parametrized by a  one-dimensional complex subspace $\Lambda$ in $\calm$.   We say  $\Lambda$
 is a {\em dynamically natural slice} for the  family $\calf$ in $\calm$   if the following conditions are satisfied:
\begin{itemize}
\item[a)] $\Lambda$ is holomorphically isomorphic to the complex plane punctured at points called {\em parameter singularities} where the function is not in the family; for example, points where the number of singular values is reduced. For readability, we do not distinguish between $\Lambda$ and its image in $\CC$ and denote either by $\Lambda$.  We use $\la$ to denote points in $\Lambda$ and $f_\la$ to denote functions in $\calf$. The singular values of $f_\la$ are distinct holomorphic functions denoted by $s_{i}(\la)$, $i=1,\ldots, N-1$, and a distinguished asymptotic value called the {\em free asymptotic value} that we denote by $v_{\la}$.
\item[ii)] The poles of $f_\la\in \calp_\la$ are distinct holomorphic functions of $\la$; they are denoted by $p_{\la,i}$, $i\in\ZZ$.
\item[ii)] The critical values and some of the asymptotic values are attracted to an attracting or parabolic cycle\footnote{If the cycle is parabolic, the functions in the slice are in the bifurcation locus of that singular value.  See~\cite{FK} for further discussion}, whose period and multiplier are constant for all $\la\in \Lambda$. We call these the {\em dynamically fixed singular values}.  The remaining singular values include $v_\la$; if there is only one remaining value, it may have arbitrary behavior. If there are several,  we require that they  satisfy a relation that persists throughout $\Lambda$ so that the remaining dynamical behavior is controlled by the behavior of $v_\la$. (See \cite{CK,CJKBif,FK} for examples where this happens.) 
\item[iv)] Suppose $v_{\la_0}$ is attracted to an attracting cycle $O_{\la_{0}}$ that does not attract any of the dynamically fixed singular values.  Then the slice $\Lambda$ contains, up to affine conjugacy,  all meromorphic maps $g:\CC\to \widehat{\CC}$ that are quasiconformally conjugate to $f_{\la_0}$ in $\CC$ and conformally conjugate to $f_{\la_0}$ on $\CC \setminus \cala_{\la_0}$, where $\cala_{\la_0}$ is the basin of attraction of the cycle $O_{\la_0}$. 
\item[v)] $\Lambda$ is maximal in the sense that if 
$\Lambda'= \Lambda \cup \{\la_0\}$ where $\la_0$ is a parameter singularity, then $\Lambda'$ does not satisfy at least one of the conditions above.
\end{itemize}
\end{definition}

In a hyperbolic component of the complement of the bifurcation locus of $v_\la$ in this slice,  $v_\la$ is attracted to an attracting cycle $O_{\la}$ of fixed period, called {\em the period of the component}. There are two possibilities:

\medskip
\begin{definition}~\label{shell}
\begin{enumerate}
\item $O_{\la}$ does not attract any of the dynamically fixed singular values. Such components are called {\em Shell components}. We denote such a component by $\Omega$.
\item $O_{\la}$ attracts one of the dynamically fixed singular values in addition to attracting $v_\la$.  These are called {\em capture components}. We do not consider them in this paper.
\end{enumerate}
\end{definition}

\medskip
\subsection{Shell Components}~\label{sec:shell components}
 Our interests here in transversality take place at the boundary points of shell components in dynamically natural slices in $\calm$. They were studied in detail   in~\cite{FK}.  In this section, we give relevant definitions and state the properties we need for our main results.  
 
As above,  $\Lambda$  is a dynamically natural slice for the famliy $\calf$ in $\calm$.  
 
 The set of poles of $f_\la$,  $\calp_{\la}$, is countable; we write
 $$ 
\calp_{\la}=\{ p_{\la,i}\;|\; i \in \ZZ\}.
$$
We reserve the notation  $v_\la$ for the free asymptotic value of the slice.  We denote a shell component in the complement of the bifurcation locus in $\Lambda$ by $\Omega$ or $\Omega_m$ if we want to emphasize the period of the attracting cycle  $O_\la=\{z_{\la,0}, z_{\la,1}, \ldots, z_{\la,m-2},  z_{\la,m-1} \} $ that attracts $v_{\la}$. 

 We need the following definition for special parameters in the bifurcation locus of $\Lambda$. 

\medskip
\begin{definition}~\label{vc} If, for  $\la^{*} \in \Lambda$, there is a  pole $p_{\la^{*}}$  of $f_{\la^{*}}$  and an integer $m>1$ such that
$f_{\la^{*}}^{m-2}(v_{\la^{*}} )= p_{\la^{*}}$, then $\la^{*}$ is called a {\em virtual cycle parameter of order $m$},  $v_{\la^{*}}$ is called a {\em virtual periodic point} and 
$$
O(\la^{*})=\{z_0(\la^{*})=v(\la^{*}), z_1(\la^{*}), \ldots, z_{m-2}(\la^{*})=p(\la^{*}),  z_{m-1}(\la^{*})=\infty\}
$$ 
%$$
%O_{\la^{*}}=\{z_{\la^{*},0}=v_{\la^{*}}, z_{\la^{*},1}, \ldots, z_{\la^{*},m-2}=p_{\la^{*}},  z_{\la^{*},m-1}=\infty\}
%$$ 
is called a virtual cycle (of period $m$). 
\end{definition}

We next define special parameters on the boundary of a shell component $\Omega$. 

\medskip
\begin{definition}~\label{vcp}    
Suppose that as $n \to \infty$,  $\la_n\in \Omega \to \la^{*} \in \partial{\Omega}$ and the multiplier $\rho_{\la_n}=\rho(O_{\la_n})=\Pi_{n=0}^{m-1} f'(z_{\la,i}^n) \to 0.$   Then $\la^{*}$ is called a {\em virtual center} of $\Omega$.
\end{definition}
%Since the attracting basin of the cycle $\mathbf a_\la$ must contain $v_\la$, we will assume throughout that the points in the cycle are labeled so that $v_\la$ and $a_{\la,1}$ are in the same component of the immediate basin of the cycle.

The following theorem summarizes the main results in~\cite{FK} about shell components in a dynamically natural slice $\Lambda$.  Let 
$$
\DD = \{ \xi \in \CC\;|\; 0<|\xi| <1 \}
$$ 
be the punctured disk. 

\medskip
\begin{theorem}~\label{shellcomponents}
Let $\Omega$ be a shell component in $\Lambda$. 
\begin{enumerate}
\item The multiplier map $\rho_\la: \Omega \rightarrow \DD$ is a universal covering map.  It extends continuously to $\partial\Omega$, and $\partial\Omega$ is piecewise analytic;  either $\Omega$ is simply connected, and $\mu_\la$ is infinite-to-one, or $\Omega$ is isomorphic to $\DD$, and the point corresponding to the puncture is a parameter singularity.
\item There is a unique virtual center $\la^{*}$ on $\partial\Omega$. If the period of the component is $1$, $v_{\la^{*}}=\infty$ and constitutes a virtual cycle consisting of a single point. 
\item  Every finite virtual center of a shell component is a virtual cycle parameter, and any virtual cycle parameter on the boundary of a shell component is the virtual center of this shell component.
\end{enumerate}
\end{theorem}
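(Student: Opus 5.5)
Since Theorem~\ref{shellcomponents} collects results from~\cite{FK}, the plan is to reconstruct their argument in three stages: (1) show $\rho_\la$ is a covering, (2) analyze the limit $|\rho_\la|\to 0$ to locate the virtual center, (3) identify virtual centers with virtual cycle parameters.

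\textbf{Step 1: $\rho_\la$ is a covering map.} Fix $\la_0\in\Omega$. Condition iv) of Definition~\ref{dynnatslice} allows a quasiconformal deformation supported on the basin $\cala_{\la_0}$.
\begin{enumerate}
\item Linearize the first return map $f_{\la_0}^m$ near $z_{\la_0,0}$ by a K\oe nigs coordinate.
\item For each $\xi\in\DD$, construct a Beltrami coefficient on the quotient torus (or punctured torus) of the immediate basin. It should change the multiplier to $\xi$ and be trivial off the grand orbit of the basin.
\item Pull it back by the dynamics and integrate via the measurable Riemann mapping theorem. Condition iv) then yields a parameter $\la(\xi)\in\Lambda$ with multiplier $\xi$.
\end{enumerate}
Holomorphic dependence on $\xi$ gives local holomorphic sections of $\rho_\la$. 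Hence $\rho_\la$ is a local homeomorphism with the path-lifting property, so it is a covering of $\DD$. The covers of $\DD$ are the universal one and the finite cyclic ones. In the finite case, $\Omega$ is a punctured disk and the puncture must be a parameter singularity; this gives the dichotomy in (1). Continuity of $\rho_\la$ up to $\partial\Omega$ follows because the cycle points vary holomorphically and stay bounded away from the poles except at the special limit point. Piecewise analyticity of $\partial\Omega$ follows from $\partial\Omega=\{|\rho|=1\}\cup\{\la^*\}$ together with the local univalence of $\rho$.

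\textbf{Step 2: the virtual center.} Lift a radial path $\xi=t\to 0$ in $\DD$ to a path $\la(t)$ in $\Omega$. I will show that $\la(t)$ cannot accumulate in the interior of $\Lambda$ at a point where the cycle stays finite. Otherwise a limiting map would have a superattracting cycle containing a critical point. But the only free singular value is the asymptotic value $v_\la$, and the critical values are dynamically fixed and not attracted to $O_\la$. This is a contradiction.

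So some point of the cycle must escape to $\infty$. Since $\infty$ is not an asymptotic value, the preceding point must tend to a pole, and the following point is forced to tend to $v$ along an asymptotic tract. This is where I use the directional limits of Definition~\ref{dirlimit} and the result of~\cite{FK} that the multiplier tends to $0$ along such a virtual cycle. For period $1$, the fixed point itself escapes, so $v_{\la^*}=\infty$.

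\textbf{Step 3: uniqueness and the identification in (3).} This is the step I expect to be the main obstacle. The equation $f_\la^{m-2}(v_\la)=p_\la$ has only isolated solutions, and the limit set of the lifted path is connected. Hence the path lands at a single point $\la^*$. Independence from the choice of path comes from the fact that any two such paths in $\DD$ landing at $0$ are homotopic within small punctured disks $\{|\xi|<\epsilon\}$, and their lifts therefore stay in a shrinking family of connected sets. The difficulty is controlling these lifts uniformly, since $\rho_\la$ is infinite-to-one.

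For the converse direction of (3), take a virtual cycle parameter $\la^*\in\partial\Omega$. By continuity of the multiplier and the $|\rho|\to0$ computation near a virtual cycle, $\rho_\la\to 0$ as $\la\to\la^*$ within $\Omega$. Thus $\la^*$ is a virtual center in the sense of Definition~\ref{vcp}, and by the uniqueness just established it is \emph{the} virtual center of $\Omega$.
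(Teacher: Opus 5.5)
The paper does not prove this theorem; it quotes it from \cite{FK}, so your outline has to stand on its own. Step~1 is the standard quasiconformal-deformation argument and is acceptable as a sketch. Step~2, however, carries the real content of (2) and (3), and it has a genuine gap.

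That $\infty$ is not an asymptotic value is a statement about paths, not about sequences of cycle points. The point preceding an escaping cycle point may itself escape, since the poles accumulate at $\infty$. And for a sequence $z_n\to\infty$ the values $f(z_n)$ can converge to essentially anything (Picard). So nothing you have said forces the next cycle point to tend to $v$ inside an asymptotic tract. Moreover, the result of \cite{FK} you invoke is the converse implication: a cycle converging to a virtual cycle has multiplier tending to $0$. What you need is that $\rho_\la\to 0$ forces degeneration onto a virtual cycle.

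The missing ingredient is an argument tying $|\rho_\la|$ to the position of $v_\la$. For instance, let $U_j$ be the basin component containing $z_{\la,j}$, and suppose (the basic case) that $v_\la\in U_2$ is the only singular value in this cycle of components. Then $f_\la\colon U_j\to U_{j+1}$ is conformal for $j\neq 1$, and $f_\la\colon U_1\to U_2\setminus\{v_\la\}$ is a universal covering. Schwarz--Pick then gives $|\rho_\la|=\eta_{U_2}(z_{\la,2})/\eta_{U_2\setminus\{v_\la\}}(z_{\la,2})$, where $\eta_U$ is the hyperbolic density of $U$. This tends to $0$ exactly when the hyperbolic distance from $z_{\la,2}$ to $v_\la$ in $U_2$ does. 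From there one shows that $z_{\la,2}$ and $v_\la$ have the same limit, the later cycle points follow the orbit of $v_{\la^*}$, $z_{\la,1}$ is pushed to $\infty$ through the tract, and $z_{\la,0}$ converges to a pole. For period $1$ the same closeness gives $v_\la\to\infty$.

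Once this is available, the obstacle you flag in Step~3 disappears, and no uniform control of lifts is needed. The set $W_\epsilon=\{\la\in\Omega : |\rho_\la|<\epsilon\}$ is connected: under the uniformization $\Omega\cong\mathbb{H}$ in which $\rho$ becomes $e^{2\pi i\tau}$ it is a horodisk $\{\operatorname{Im}\tau>T\}$, and in the finite case it is a punctured disk. Hence the set of virtual centers, $\bigcap_{\epsilon>0}\overline{W_\epsilon}$, is a continuum. It lies in a countable set (order-$m$ virtual cycle parameters, parameter singularities, $\infty$), so it is a single point.

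The converse half of (3) is circular as written. ``Continuity of $\rho$'' says nothing about the value of the extension at $\la^*$. The multiplier computation of \cite{FK} applies only to cycles already known to converge to the virtual cycle of $f_{\la^*}$. A priori $O_\la$ could converge to a neutral cycle at $\la^*$, exactly as it does at every other boundary point. You need two things.
\begin{enumerate}
\item A boundary dichotomy: along any approach to a point of $\partial\Omega$, the cycle $O_\la$ either converges to a genuine cycle whose multiplier has modulus $1$, or degenerates with $\rho_\la\to 0$ as above. An attracting limit would persist and put the point inside a hyperbolic component; a superattracting one would need a critical point.
\item The fact that $f_{\la^*}$ has no neutral cycle onto which $O_\la$ could collapse. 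Such a cycle must be tied to a singular value, either one in its parabolic basin or one whose orbit accumulates on it. But $v_{\la^*}$ is a prepole with finite orbit, and the remaining singular values are captured by the dynamically fixed cycle.
\end{enumerate}
The same dichotomy is what you silently use in Step~1 when you write $\partial\Omega=\{|\rho|=1\}\cup\{\la^*\}$ and assert continuity up to the boundary.

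Finally, the claim that in the finite-degree case the puncture is a parameter singularity needs an argument. One route: $\Phi$ is open near a virtual cycle parameter in $\Lambda$. So parameters where $v_\la$ is a prepole of higher order accumulate there, and these lie in the bifurcation locus. Hence a virtual cycle parameter in $\Lambda$ cannot be an isolated boundary point of $\Omega$.
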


\section{Dynamics around A Virtual Center}~\label{sec:tran}
%Continuing with our notation above, $\calf = \{ f_{\la} \;|\; \la \in \Lambda\}$) is a dynamically natural slice in $\calm$. 
If $\la^{*} \in \Lambda$ is a virtual center of a shell component $\Omega$ of order $m >1$,  the free asymptotic value $v_{\la^{*}}$ is a virtual periodic point of order $m$. The virtual cycle is  
 $$
 \{v_{\la^{*}}, f_{\la^{*}}(v_{\la^{*}}), f_{\la^{*}}^2(v_{\la^{*}}), \ldots, f_{\la^{*}}^{m-3}(v_{\la^{*}}), p_{\la^{*}}, \infty\}.
 $$
Label the points $c_{0}=p_{\la^{*}}$, $c_{1}=\infty$, $c_2=v_{\la^{*}}$ and $c_{i} = f_{\la^{*}}^{i-2} (v_{\la^{*}})$ for $3 \leq i\leq m-1$ so that  
   $f_{\la^{*}}^{m-2} (c_{2}) =c_{0}$. For $\lambda \in \overline{\Lambda}$, consider the function 
\begin{equation}~\label{tran}
\Phi (\la)=f_{\la}^{m-2}(v_\la).
\end{equation}
Clearly, the function is smooth. 
The equation 
\begin{equation}~\label{traneq}
\Phi (\lambda) -c_{0}=0
\end{equation}
has a solution $\la^{*}$.  

\begin{definition}\label{transv} We say $\Phi(\la)$ is  {\em transversal at $\la^{*}$} or that {\em transversality holds at $\la^{*}$} if the derivative $\Phi' (\la^{*}) \not=0$;  this implies that $\la^{*}$ is a simple solution of Equation (\ref{traneq}). 
\end{definition}

The purpose of this section is to prove that in any dynamically natural slice $\Lambda$, transversality holds at every virtual center. More precisely, we will prove:

\medskip
\begin{starthm1}~\label{transversality}
Suppose $\Lambda$ is a dynamically natural slice  for the family $\calf =\{ f_{\la} \;|\; \la\in \Lambda\}$ in $\calm$. Then, transversality holds at every virtual center $\la^{*} \in \Lambda$.  
\end{starthm1}

We divide the proof into several subsections. 

 \subsection{ Holomorphic motions, lifts of holomorphic motions and the transfer operator}

Since our proof of transversality uses holomorphic motions, lifts of these motions, and the transfer operator,  we recall their definitions before we apply them in our context. For a comprehensive study of holomorphic motions, we refer the reader to~\cite{GJW}. 
Let
$$
\Delta_{r}=\{ s \in \CC\; |\; |s| < r\}
$$
be the disk of radius $r>0$ centered at $0$ and let $\Delta=\Delta_{1}$ be the unit disk. 
 
\medskip
\begin{definition}\label{holommot}
Suppose $P$ is a subset of the Riemann sphere $\widehat{\CC}$.  
A map $h (s, z): \Delta_{r}\times P\to \widehat{\mathbb{C}}$ is called a {\em holomorphic motion of $P$ over $\Delta_{r}$}  if 
\begin{itemize}
\item[(1)] $h(0,z)=z$ for all $z\in P$;
\item[(2)] for any fixed $s \in \Delta_{r}$, the map $h_{s} (\cdot)=h(s, \cdot): P\to \widehat{\mathbb{C}}$ is injective;
\item[(3)] for any fixed $z\in P$, the map $h_{z} (\cdot)=h(\cdot, z): \Delta_{r}\to  \widehat{\mathbb{C}}$ is holomorphic.
\end{itemize}
\end{definition}

We want to define holomorphic motions for the family of meromorphic functions  in a given dynamically natural slice $\Lambda$. To this end, 
let $\la^{*}\in \Lambda$ be a virtual center of the slice and let $P$ be the subset of the Riemann sphere defined by the virtual cycle of $f_{\la^{*}}$,  
\[P=O_\la^{*}=\{ z_{\la^{*},0}=p_{\la^{*}}, z_{\la^{*},1}=\infty, z_{\la^{*},2}=v_{\la^{*}}, \ldots,z_{\la^*,m-1 }\}.\]

%\[   
%P=\{ c_{0}, c_{1}, c_{2}, \ldots, c_{m-1}\} . 
%\]  

Since $\la^{*}$ is fixed, for notational simplicity in the formulas below,   we set $g=f_{\la^{*}}$ and $c_i=z_{\la^*,i}$.  Then  $c_{0}$ is a pole of $g$, $c_1=\infty=g(c_0)$, $c_2=g(\infty)$ as a directional limit, $c_{i+1}=g(c_{i})$ for $i= 2, \ldots, m-2$, and $g(c_{m-1})=c_m=c_{0}$. 

The discussion below is valid if we use the parameter $w=v_{\la}$ instead of $\la$ to index the functions in  the dynamical slice.   We therefore set $F_w(z)=F(w,z)=f_{\la}(z)$ 
and  denote the set of poles and critical points of $F_{w}$ by $\cale_{w}$; we denote the finite singular set of  $F_w$ consisting of critical values and asymptotic values by $\tilde{\cals}_w$. Define the Riemann surfaces 
\[
S_{w,0}=\mathbb{C}\setminus \cale_{w} \quad \hbox{and}\quad S_{w,1}=\mathbb{C}\setminus \tilde{\cals}_w.
\]
Since  $w=v_{\la}$ is an asymptotic value of the complex map $F_{w}(z)$, it follows that

\medskip
\begin{lemma}\label{cov}
The map 
\[
F_{w}(z) : S_{w,0}\to S_{w,1}
\]
is a holomorphic covering map of infinite degree.
\end{lemma}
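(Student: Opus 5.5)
The plan is to verify the two defining properties of a covering map for $F_w$ restricted to $S_{w,0}$: that it is a local homeomorphism onto its image, and that every point of $S_{w,1}$ has an evenly covered neighborhood. Infinite degree will then come from transcendence.

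\emph{Step 1: well-definedness and local homeomorphism.} Since $\cale_w$ contains all poles, $F_w$ is holomorphic and finite on $S_{w,0}$. Since $\cale_w$ contains all critical points, $F_w' \neq 0$ there, so $F_w$ is a local biholomorphism. We must also check that $F_w(S_{w,0}) \subset S_{w,1}$; this does not follow automatically, since a regular point may map onto an asymptotic value or a critical value. To handle this, I will interpret $\cale_w$ as the full preimage $F_w^{-1}(\tilde{\cals}_w\cup\{\infty\})$, which contains the poles and critical points. Equivalently, I will replace $S_{w,0}$ by $F_w^{-1}(S_{w,1})$. This is an open set, because $\tilde{\cals}_w$ is finite (finite type) and hence closed. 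It also differs from $\CC\setminus\{\text{poles, critical points}\}$ only by a discrete set. With this reading, $F_w:S_{w,0}\to S_{w,1}$ is a well-defined local biholomorphism.

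\emph{Step 2: the covering property.} This is the key step. Take $y\in S_{w,1}$ and a round disk $D\ni y$ with $D\cap \tilde{\cals}_w=\emptyset$; such a disk exists because $\tilde{\cals}_w$ is finite. By Definition~\ref{singlepts}, $y$ is not a singular value, so every branch of $F_w^{-1}$ is well defined near $y$. Since $D$ is simply connected and contains no singular values, every branch continues analytically throughout $D$ by the monodromy theorem. The asymptotic values are exactly the obstruction to continuing along paths, and the critical values are the obstruction to local invertibility, so neither obstruction occurs in $D$.

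Hence $F_w^{-1}(D)$ is a disjoint union of sets $g_j(D)$, one for each branch $g_j$, and each $F_w:g_j(D)\to D$ is a biholomorphism. The sets $g_j(D)$ lie in $S_{w,0}$ because their points map into $D\subset S_{w,1}$. This shows that $D$ is evenly covered.

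The main obstacle in this step is justifying that path-lifting never fails inside $D$. A lift could escape to $\infty$ only along a curve on which $F_w$ tends to a finite limit, and that limit would be an asymptotic value in $\overline{D}$. Shrinking $D$ so that $\overline{D}$ also avoids $\tilde{\cals}_w$ rules this out. This is the standard Iversen-type fact that a meromorphic function of finite type is a covering over the complement of its singular values, which I would cite from \cite{Berg}.

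\emph{Step 3: infinite degree and surjectivity.} Surjectivity onto $S_{w,1}$ follows from Picard's theorem: $F_w$ omits at most two values, and any omitted value is asymptotic, so it is excluded from $S_{w,1}$. Because $S_{w,1}$ is connected (the plane minus finitely many points), the fiber cardinality is constant. Take any $y\in S_{w,1}$ that is not among the at most two Picard-exceptional values. Since $F_w$ is transcendental with an essential singularity at $\infty$, the great Picard theorem gives infinitely many preimages of $y$. So the degree is infinite.

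Finally, the choice of the parameter $w=v_\la$ enters only through the fact that $w\in\tilde{\cals}_w$ is removed from the target. This is why the statement emphasizes that $w$ is an asymptotic value.
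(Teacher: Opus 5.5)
Your proposal is correct. The paper gives no real proof: it only remarks that $w=v_\la$ is an asymptotic value, and so must be removed from the target. Otherwise it relies on the classical fact that a finite-type meromorphic function is a covering over the complement of its singular values. Your Step~2 is exactly the standard path-lifting (Iversen-type) argument behind that fact, so there your route and the paper's coincide.

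You genuinely depart from the paper in Step~1, and there you are right and the paper's formulation is not. With $\cale_w$ taken literally as the poles and critical points, $F_w$ need not map $S_{w,0}$ into $S_{w,1}$ at all. Critical values generally have non-critical preimages, and an asymptotic value can be attained at regular points; both happen, for example, for $P\circ\tan$ with $P$ a generic cubic. So the correct domain is $F_w^{-1}(S_{w,1})$, which differs from the paper's $S_{w,0}$ only by a closed discrete set.

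Two minor points. First, the appeal to Definition~\ref{singlepts} at the start of Step~2 essentially restates the conclusion. The real content is your observation that a maximal lift of a path $\gamma$ in $D$ can end in only two ways. It may converge to a finite point, which is then a regular non-pole point where the lift continues. Or it may escape to $\infty$, which makes $\gamma(t_0)\in D$ an asymptotic value; this is why shrinking $D$ is not needed. Second, surjectivity follows directly from path lifting over the connected base $S_{w,1}$. Picard's theorem is therefore needed only to see that some fiber, hence every fiber, is infinite.
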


\medskip
In the following, to make our notation agree with that in Definition~\ref{holommot},  we set $s=\la-\la^{*}$ and let $\Delta$ be the disk centered at $s=0$;  as above, we let  $P=\{ c_{0}, c_{1}, c_{2}, \ldots, c_{m-1}\}$  be the virtual cycle of $f_{\la^{*}}$.  

\medskip
\begin{definition}~\label{liftdef}
Suppose $h (s, z): \Delta\times P\to \widehat{\mathbb{C}}$ is a holomorphic motion with $h(s, c_{1})=c_{1}=\infty$ for all $s\in \Delta$; set $c_{2}(s) =h(s, c_{2})$.  We say another holomorphic motion $\widehat{h} (s, z): \Delta\times P\to \widehat{\mathbb{C}}$ is a {\em lift of $h$} if 
\begin{equation}~\label{lifteq}
h_{s}(g(c_{i})) = F_{c_{2}(s)}(\widehat{h} (s, c_i)),
\end{equation}
for all $ i=0,1, \ldots, m-1$ and all $s\in \Delta$; that is, the following diagram commutes
$$
\xymatrix{{\Delta \times P} \ar[r]^{\widehat{h}}\ar[d]_{Id \times g} &\CC\ar[d]^{F_{c_{2}(s)}}\\
\Delta \times P\ar[r]^{h}& \CC.}
$$
\end{definition}

Let $c_{i}(s) =h(s, c_{i})$ for $0\leq i\leq m-1$. 
From (\ref{lifteq}), $\widehat{h}(s, c_{0}) =p_{c_{2}(s)}$ is a pole of $F_{c_{2}(s)}$. Since $c_{2}(s)=w=v_{\la}$ is the free asymptotic value of $F_{c_{2}(s)}=F_{v(\la)}$, from (\ref{lifteq}), $\widehat{h} (s, c_{1})=c_{1}=\infty$. 

\medskip
\begin{lemma}~\label{lift}
Suppose $h (s, z): \Delta\times P\to \widehat{\mathbb{C}}$ 
is a holomorphic motion such that $h(s,c_{1})=c_{1}$ for all $s\in \Delta$. 
Then we can find a lift of $h$.
\end{lemma}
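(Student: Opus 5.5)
We construct $\widehat{h}$ point by point, using the covering property of Lemma~\ref{cov} and the fact that $P$ is finite. Set $w(s)=c_2(s)=h(s,c_2)$, which is holomorphic in $s$. For each $i$ we need $\widehat{h}(s,c_i)$ with $F_{w(s)}(\widehat{h}(s,c_i))=h(s,g(c_i))=c_{i+1}(s)$, indices mod $m$, and with $\widehat{h}(0,c_i)=c_i$. We treat the two exceptional points first and then the regular ones.

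\emph{The points $c_0$ and $c_1$.}
\begin{itemize}
\item For $i=1$, the definition forces $\widehat{h}(s,c_1)=\infty$. This is consistent in the directional-limit sense, because $w(s)$ is the free asymptotic value of $F_{w(s)}$.
\item For $i=0$ we need $F_{w(s)}(\widehat{h}(s,c_0))=h(s,c_1)=\infty$, so $\widehat{h}(s,c_0)$ must be a pole. By condition ii) of Definition~\ref{dynnatslice}, the poles of $f_\la$ are holomorphic functions of $\la$, hence of $w$ wherever $\la\mapsto v_\la$ is locally invertible. Take $\widehat{h}(s,c_0)=p_{w(s)}$, the holomorphic continuation of the pole $p_{\la^*}=c_0$.
\end{itemize}
If $\la\mapsto v_\la$ is not locally invertible at $\la^*$, I would instead parametrize by $\la$: choose $\la(s)$ holomorphic with $v_{\la(s)}=w(s)$ after shrinking $\Delta$. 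This is the first delicate point. I expect the paper's convention of indexing by $w$ to assume local invertibility implicitly, and I would do the same.

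\emph{The points $c_i$, $2\le i\le m-1$.} Here $c_i$ is finite and is neither a pole nor a critical point of $g$, since its image $c_{i+1}$ is a point of the virtual cycle and $g$ is locally a covering near $c_i$. I would use the implicit function theorem on $G(s,z)=F_{w(s)}(z)-c_{i+1}(s)$ at $(0,c_i)$. Since $\partial_z G(0,c_i)=g'(c_i)\neq 0$, there is a unique holomorphic $z_i(s)$ near $0$ with $z_i(0)=c_i$. For $i=m-1$ the target $c_m=c_0$ is a finite pole, and the same argument applies. Alternatively, one can apply the lifting property of the holomorphic covering $F_w:S_{w,0}\to S_{w,1}$ from Lemma~\ref{cov} to the path $s\mapsto c_{i+1}(s)$, which avoids the singular values.

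\emph{Shrinking the disk and checking the lift.} Each implicit-function solution lives on a small disk. Since $P$ is finite, I shrink $\Delta$ to a common radius; after rescaling $s$ this does not affect the definition. Injectivity of $\widehat{h}_s$ holds at $s=0$ because the $c_i$ are distinct, and it persists for small $s$ by continuity. Holomorphy in $s$ holds by construction, and $\widehat{h}(0,z)=z$ by the choice of branches. Hence $\widehat{h}$ is a holomorphic motion satisfying (\ref{lifteq}).

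The main obstacle is nondegeneracy: verifying that no $c_i$ with $i\ge 2$ is a critical point of $g$. This follows because a critical point in the cycle would contradict the dynamically natural hypothesis that critical values are attracted to a fixed attracting cycle, disjoint from the Julia set containing $P$. The justification of the pole's holomorphic dependence on $w$ is the other point requiring care.
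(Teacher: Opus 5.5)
Your implicit-function construction is correct locally, but it proves a weaker statement than the lemma. By Definition~\ref{liftdef}, a lift of $h$ must be a holomorphic motion over the same disk $\Delta$ and satisfy (\ref{lifteq}) for every $s\in\Delta$. You only obtain it on some $\Delta_r$. Your claim that ``after rescaling $s$ this does not affect the definition'' is false: rescaling replaces $h$ by the different motion $h(rs,\cdot)$, and what you get is a lift of that motion, not of $h$.

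This matters because of how the lemma is used. Lemmas~\ref{lift1}, \ref{sp} and \ref{increaseorder} need an infinite tower $h_0,h_1,h_2,\ldots$, with $h_{k+1}$ a lift of $h_k$, all over one common disk. They also read off $\mathbf v_{k+1}=A\mathbf v_k$ from the derivatives at $s=0$. If you shrink at each step, the radii may tend to $0$. If you rescale at each step, you get $\mathbf v_{k+1}=r_kA\mathbf v_k$, so a bound on $r_0\cdots r_{k-1}A^k\mathbf v_0$ says nothing about $\|A^k\mathbf v_0\|$. For the same reason, injectivity ``for small $s$ by continuity'' is not enough.

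The missing ingredient is the global argument you mention in passing but do not carry out, which is the paper's proof. Since $\Delta$ is simply connected and $F_{c_2(s)}:S_{c_2(s),0}\to S_{c_2(s),1}$ is a covering (Lemma~\ref{cov}), each map $s\mapsto h(s,c_{i+1})$, $2\le i\le m-1$, lifts to a holomorphic $\widehat h(s,c_i)$ on all of $\Delta$ with $\widehat h(0,c_i)=c_i$. Equivalently, your local implicit-function branch continues along every path in $\Delta$.

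Injectivity then holds for every $s\in\Delta$ by pushing forward rather than by continuity. If $\widehat h(s,c_i)=\widehat h(s,c_j)$ with $i\neq j$, applying $F_{c_2(s)}$ gives $h(s,c_{i+1})=h(s,c_{j+1})$. This contradicts injectivity of $h_s$, since $c_{i+1}\neq c_{j+1}$. For $i\ge 2$ the point $\widehat h(s,c_i)$ lies in $S_{c_2(s),0}$, so it is neither the pole $\widehat h(s,c_0)$ nor $\infty$.

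Your observation that no $c_i$ with $i\ge 2$ is a critical point (critical values lie in the Fatou set, $P$ in the Julia set) is a useful addition that the paper leaves implicit. Note, however, that the global lift also needs $h(s,c_{i+1})$ to avoid the singular values of $F_{c_2(s)}$ for all $s\in\Delta$. You assert this without proof, and the paper likewise takes it for granted.
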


\begin{proof}
First, define $\widehat{h}(s, c_{0}) =p_{c_{2}(s)}$ and $\widehat{h} (s, c_{1}) =c_{1}$ for all $s\in \Delta$. Since $c_{m}=c_{0}$,
$$
F_{c_{2}(s)}:\cals_{c_{2}(s), 0}\to \cals_{c_{2}(s),1}
$$
is a covering map, $h(s, c_{i+1}): \Delta\to \cals_{c_{2}(s),1}$ for each $i, \, 2\leq i\leq m-1$ is a holomorphic map,  $\Delta$ is simply connected, and $F_{c_{2}(0)} =g$.  Therefore we can lift it to a holomorphic map $\widehat{h} (s, c_{i}): \Delta \to \cals_{c_{2}(s), 0}$ with $\widehat{h}(0, c_{i}) =c_{i}$; that is, the following diagram commutes,  
$$
\xymatrixcolsep{5pc}
\xymatrix{&\cals_{c_{2}(s), 0} \ar[d]^{F_{c_{2}}(s)}\\
\Delta \ar[r]^{h(s, c_{i+1})}  \ar[ru]^{\widehat{h} (s, c_{i})} & \cals_{c_{2}(s),1}}
$$
Thus we have a map $\widehat{h} (s, z): \Delta\times P\to \widehat{\CC}$ such that for each fixed $z=c_{i}$, $0\leq i\leq m-1$, $\widehat{h} (s, c_{i}): \Delta\to \widehat{\CC}$ is holomorphic and $\widehat{h} (0, c_{i}) =c_{i}$, $0\leq i\leq m-1$. 

To show $\widehat{h} (s, z)$ is a holomorphic motion, we now need only show that for every fixed $s\in \Delta$, $\widehat{h} (s, \cdot): P\to \widehat{\CC}$ is injective.  Since $h(s, c): \Delta\times P\to \widehat{\CC}$ is a holomorphic motion, if $i \neq j$, $c_{i}\not=c_{j}$. Because $\widehat{h} (s, c_{i})$ and $\widehat{h} (s, c_{j})$ are lifts of $h (s, c_{i+1})$ and $h (s, c_{j+1})$ under the covering map $F_{c_{2}(s)}: \cals_{c_{2}(s), 0}\to \cals_{c_{2}(s), 1}$, $h (s, c_{i+1})\not=h (s, c_{j+1})$.  Thus, $\widehat{h} (s, \cdot)$ is injective.  

%$\widehat{h} (s, c_{i})\not=\widehat{h} (s, c_{j})$  and   for $0\leq i, j\leq m-1$, if $i \neq j$,  $c_{i}\not=c_{j}$,  $\widehat{h} (s, c_{i})$ and $\widehat{h} (s, c_{j})$ are lifts of $h (s, c_{i+1})$ and $h (s, c_{j+1})$ under the covering map $F_{c_{2}(s)}: \cals_{c_{2}(s), 0}\to \cals_{c_{2}(s), 1}$ and  
% (. This completes the proof. 
\end{proof}

Suppose $\widehat{h}$ is a lift of the holomorphic motion $h$. If we differentiate  equation (\ref{lifteq})
% $$
% h (s, g(c_{i}))= F_{c_{2}(s)} (\widehat{h} (s, c_{i})).
% $$
at $s=0$ for each $i=2, \ldots, m-1$, we get an $(m-2)\times (m-2)$-matrix $A$ such that 
$$
%\begin{equation}~\label{tranop}
\Big( \frac{\partial\widehat{h}(s, c_{i})}{\partial s}\Big|_{s=0}\Big)=A\Big( \frac{\partial h(s, c_{i})}{\partial s}\Big|_{s=0}\Big),
%\end{equation}
$$
where $\Big(\frac{\partial\widehat{h}(s, c_{i})}{\partial s}\Big|_{s=0}\Big)$ and $\Big( \frac{\partial h(s, c_{i})}{\partial s}\Big|_{s=0}\Big)$ denote two column vectors in $\RR^{m-2}$. The entries in the matrix $A$ depend only on the partial derivatives of $f(s,z)$  at $s=0$ evaluated at the points of $P$.   
Computing,   we obtain the matrix $A=(a_{i,j})_{(m-2)\times(m-2)}$, where
$$
a_{i,2} = \frac{- \frac{\partial F}{\partial w}(c_2, c_i)}{\frac{\partial F}{\partial z}(c_2, c_i)}, \, \, i = 2, \ldots, m-1,
$$
$$
a_{i,i+1} = \frac{1}{\frac{\partial F}{\partial z}(c_2, c_i)}, \, \, i = 2, \ldots, m-1,
$$
%a_{1,2} = \frac{1-\frac{\partial F}{\partial w}(c_2, c_1)}{\frac{\partial F}{\partial z}(c_2, c_1)},\\
$$
a_{i,j} = 0 \quad \mbox{otherwise}.
$$
That is
$$
A=\begin{pmatrix}~\label{form}
\frac{- \frac{\partial F}{\partial w}(c_2, c_2)}{\frac{\partial F}{\partial z}(c_2, c_2)} & \frac{1}{\frac{\partial F}{\partial z}(c_2, c_2)}&0&0&\cdots &0\\
\frac{- \frac{\partial F}{\partial w}(c_2, c_3)}{\frac{\partial F}{\partial z}(c_2, c_3)} &0& \frac{1}{\frac{\partial F}{\partial z}(c_2, c_3)}&0&\cdots &0\\
\vdots & \vdots &\vdots &\vdots&\vdots &\vdots\\
\frac{- \frac{\partial F}{\partial w}(c_2, c_{m-1})}{\frac{\partial F}{\partial z}(c_2, c_{m-1})} &0&0&0 &\cdots &\frac{1}{\frac{\partial F}{\partial z}(c_2, c_{m-1})}
\end{pmatrix}
$$

\medskip
\begin{definition}\label{transop} We call the matrix $A$ the {\em transfer operator} associated with $g=f_{\la^{*}}$. 
\end{definition}

Given a holomorphic motion $h_{0} (s, z): \Delta\times P\to \widehat{\mathbb{C}}$ such that $h_{0}(s,c_{1})=c_{1}$, we can apply Lemma~\ref{lift} to  construct a sequence of holomorphic motions
$$
\{ h_{k} (s, z): \Delta\times P\to \widehat{\CC} \} _{k=0}^{\infty}
$$ 
such that $h_{k+1}$ is a lift of $h_{k}$. Let $\widehat{P}=P\setminus \{ c_{1}\}$. 
 
\medskip
\begin{lemma}~\label{lift1}
There is an $r, \, 0< r\leq 1$ such that 
$$
\sup_{s\in \Delta_{r}, z\in \widehat{P}, k=0, 1, \ldots} |h_{k} (s, z)| < \infty.
$$
\end{lemma}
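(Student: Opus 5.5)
The strategy is to use the way the lifts are defined: at each level, every point of $\widehat P$ is obtained by pulling back a point one step further along the virtual cycle through a single branch of $F_{c_2(s)}^{-1}$. Because $P$ is finite, it is enough to show that for each $i$ the maps $s\mapsto h_k(s,c_i)$, $k\ge 0$, lie in a fixed compact subset of $\CC$ when $s\in\Delta_r$. The point $c_0$ is treated separately. For $k\ge1$ we have $h_k(s,c_0)=p_{c_2(s)}$, which is a pole of $F_{c_2^{(k-1)}(s)}$ depending holomorphically on the parameter, and here $c_2^{(k-1)}(s)=h_{k-1}(s,c_2)$. So once $h_{k-1}(s,c_2)$ is known to stay in a small neighborhood of $c_2$, the pole $p$ is confined to a small neighborhood of $c_0$ and is bounded there.

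The core of the argument is the following observation. For $2\le i\le m-1$, the points $c_{i+1}$ (read as $c_0$ when $i=m-1$) are finite and are not singular values of $g$. Indeed, $c_0$ is a pole, and the points $c_3,\dots,c_{m-1}$ lie on the orbit of the free asymptotic value, which is finite and differs from the dynamically fixed singular values. Hence there are a radius $\epsilon>0$ and a parameter neighborhood $W$ of $c_2$ such that, for every $w\in W$, the branch of $F_w^{-1}$ sending $c_{i+1}$ to $c_i$ is univalent on the disk $D(c_{i+1},\epsilon)$ and maps it into $D(c_i,\epsilon)$. The case $i=2$ is special, because $c_2=v_{\la^*}$ is an asymptotic value and therefore cannot be one of the images. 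For $i\ge 3$ the inclusion holds after shrinking $\epsilon$ for each point, using finiteness and the openness of holomorphic branches. One must also check that $F_w$ has bounded derivative, so that the preimage of a small disk is small.

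With this in hand, the plan is an induction. Choose $r$ small so that $h_0(s,c_i)\in D(c_i,\epsilon)$ for all $s\in\Delta_r$ and all $i\neq 1$. Suppose this holds for $h_k$. By uniqueness of lifts with $\widehat h(0,c_i)=c_i$, and because $\Delta_r$ is simply connected, $h_{k+1}(s,c_i)$ is the value at $h_k(s,c_{i+1})$ of the chosen univalent branch of $F_{h_k(s,c_2)}^{-1}$. That value lies in $D(c_i,\epsilon)$. For $c_0$ the pole argument above applies in the same way. The disks are therefore forward invariant under the lifting, which gives the uniform bound.

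The main obstacle is that the contraction, or at least the invariance $D(c_{i+1},\epsilon)\to D(c_i,\epsilon)$, is not automatic. The inverse branch has derivative $1/F_z(c_2,c_i)$, which may be large, and the parameter dependence through $c_2(s)$ adds an error term. If invariance fails, the fallback is a normal-families argument. Each $h_k(\cdot,c_i)$ is holomorphic on $\Delta$ and omits at least three values: the points of the finite set $\tilde{\cals}$ moving holomorphically, after a normalization by a holomorphic Möbius change that fixes them. By Montel's theorem the family $\{h_k(\cdot,c_i)\}_k$ is then normal, and since $h_k(0,c_i)=c_i$ is finite, the family is uniformly bounded on a smaller disk $\Delta_r$. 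This route avoids derivative estimates entirely, so I would try it first. The delicate point is to verify that the omitted values move holomorphically and remain distinct across all $k$. This is what the injectivity of the holomorphic motions, as in Lemma~\ref{lift}, provides, together with the fact that the lift never lands on $\cale_w$.
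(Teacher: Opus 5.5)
Your fallback is essentially the paper's strategy: apply Montel's theorem to $h_k(\cdot,c_i)$ using three moving omitted values. The paper takes these to be $c_{0,k}(s)$, $\infty$, $c_{2,k}(s)$, the other points of the same motion, which $h_k(s,c_i)$, $3\le i\le m-1$, omits by injectivity of $h_k(s,\cdot)$. It does not use the singular set.

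The step you call delicate is, however, a genuine gap, and injectivity does not close it. Montel's theorem with moving targets needs the mutual spherical distances of the targets to be bounded below \emph{uniformly in $k$ and $s$}. Otherwise the normalizing M\"obius maps degenerate: for example, $s\mapsto ks$ on $\Delta$ omits $2k$, $2k+1$, $\infty$ and is not normal at $0$. Injectivity gives only distinctness for each fixed $(k,s)$.

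The argument is also circular. Your targets $\tilde{\cals}_w$ with $w=h_k(s,c_2)$, like the paper's $c_{0,k}=p_{h_{k-1}(s,c_2)}$ and $c_{2,k}=h_k(s,c_2)$, are controlled only if $h_k(\cdot,c_2)$ is bounded uniformly in $k$. That bound is part of the conclusion. An induction on $k$ does not break the circle: Montel/Schottky on $\Delta_r$ controls $h_k$ only on $\Delta_{\theta r}$ for a fixed $\theta<1$, so the radius shrinks to $0$.

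The paper has the same hole. It asserts $|c_{2,k}-c_{0,k}|\ge 1/M_0$ and $|c_{0,k}|,|c_{2,k}|\le M_0$ on a fixed $\Delta_{r_0}$ for all $k$ ``since the holomorphic motions are injective''. So your proposal matches the paper's level of rigor, but neither argument is complete at this point.

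Drop your first route, as you suspected. Forward invariance of the disks under the inverse branches is essentially stability of the fixed point $(c_0,c_2,\dots,c_{m-1})$ of the lifting map, which is the lemma itself. Nothing makes $|\partial F/\partial z(c_2,c_i)|$ large, and shrinking $\epsilon$ does not help against an expanding branch. You also cannot borrow contraction from $A$, because Lemma~\ref{sp} is deduced from the present lemma.

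What would close the gap is a triple of omitted values independent of $k$. By construction a lift lands in $\CC\setminus\cale_w$, so what it omits is the \emph{poles} of $F_w$, not $\tilde{\cals}$. Avoiding $\tilde{\cals}$ is only the hypothesis needed for the next lift, and $h_k(s,c_2)$ is itself the free singular value. Moreover, $\tilde{\cals}$ may contain only two points.

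Suppose two poles do not move with the parameter. This holds for $\la\tan z$; in general one can try to arrange it by a $\la$-dependent affine conjugation, since the poles are holomorphic in $\la$. Then every $h_k(\cdot,c_i)$ with $i\ge 2$ and $k\ge 1$ maps $\Delta$ into $\CC$ minus two fixed points. Schottky's theorem together with $h_k(0,c_i)=c_i$ gives a bound on a fixed $\Delta_r$. The remaining point $h_k(\cdot,c_0)=p_{h_{k-1}(\cdot,c_2)}$ is then handled exactly as in your first paragraph.
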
 

\begin{proof}
% We prove the lemma by contradiction. Suppose that for every $r<1$, 
% $$
% \sup_{s\in \Delta_{r}, z\in \widehat{P}, k=0, 1, \cdots} |h_{k} (s, z)| = \infty.
% $$  
% Since $\widehat{P}$ contains only finitely many points, this assumption implies that for some fixed $x\in \widehat{P}$ there is a sequence $\{k_{n}\}$ of integers tending to $\infty$ and a sequence of complex numbers $s_{n}\in \Delta$ tending to $0$ such that $h_{k_{n}}(s_{n}, x) \to \infty$ as $n\to \infty$. 

For each $k\geq 1$, $c_{0,k}(s) =h_{k} (s, c_{0})=p_{c_{2,k}(s)}$ is the pole of $F_{c_{2, k}(s)}$, $c_{1,k}(s)= h_{k} (s, c_{1}) =\infty$, and $c_{2,k}(s) =h_{k} (s, c_{2})=v_{c_{2,k}(s)}$ is the free asymptotic value of $F_{c_{2,k}(s)}$. At $s=0$, $c_{0,k}(0) =p_{\la^{*}}$ and $c_{2,k}(0) =v_{\la^{*}}$. Since the holomorphic motions  are injective, we can find an $r_{0}\in (0, 1)$ and a constant $M_{0}>0$ such that for all $s\in \Delta_{r_{0}}$,
$$
|c_{2,k}(s)-c_{0,k}(s)|\geq \frac{1}{M_{0}}\quad \hbox{and} \quad |c_{0,k}(s)|, |c_{2,k}(s)| \leq M_{0}.
$$

For every $c_{i}$, $3\leq i\leq m-1$,  at every $s\in \Delta_{r_{0}}$, the holomorphic function
$$
h_{k} (s, c_{i}) \not = c_{0,k} (s), c_{1,k}(s),  c_{2, k}(s).
$$
Thus, the family 
$$
\{ h_{k}(s, c_{i}) \;|\; s\in \Delta_{r_{0}}\}
$$ 
is a normal family. This implies that the sequence $h_{k_{n}} (s, c_{i})$ has a subsequence, which we again denote by $h_{k_n}(s, c_{i})$, that converges to a holomorphic function $h_{i}(s)$ or to the constant $\infty$. Since $h_{k_{n}} (0,c_{i})=c_{i}$ for all $n$, $h_{k_{n}} (s, c_{i})$ must converge to a holomorphic function $h_{i}(s)$ on $\Delta_{r_{0}}$;  therefore, there are constants $0<r_{i}<1$, $0<M_{i}<\infty$, such that for every $n\geq 1$, $|h_{k_{n}} (s, c_{i})|\leq M_{i}$ for all $s\in \Delta_{r_{i}}$. Let 
$$
r=\min\{r_{i}\;|\; i=0, 2, 3, \cdots, m-1\}.
$$
Then we have
$$
\sup_{s\in \Delta_{r}, z\in \widehat{P}, k=0, 1, \ldots} |h_{k} (s, z)| < \infty.
$$
\end{proof}    

\subsection{The Spectral Radius of the Transfer Operator.}

We state the following result as a lemma; it is another important component in our proof of transversality. Let
$$
a=\min\{ |c_{i}-c_{j}| \;|\; i,j \neq 1, 0\leq i\not=j\leq m-1\}.   
$$

\medskip
\begin{lemma}~\label{sp}
The spectral radius of $A$ is less than or equal to $1$.
\end{lemma}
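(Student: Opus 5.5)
We plan to prove Lemma~\ref{sp} by combining Lemma~\ref{lift1} with the linearization of the lifting procedure. The key observation is that iterated lifting, linearized at $s=0$, is exactly iteration of $A$, while Lemma~\ref{lift1} bounds the iterated lifts uniformly.

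First we start from an arbitrary initial vector. Fix any vector $\mathbf v=(v_2,\ldots,v_{m-1})^T\in\CC^{m-2}$; we work over $\CC$, since spectral radius concerns complex eigenvalues. Construct a holomorphic motion $h_0$ of $P$ over $\Delta$ with
\[
h_0(s,c_1)=\infty,\qquad h_0(s,c_0)=p_{c_2+sv_2},\qquad h_0(s,c_i)=c_i+s v_i \quad (2\le i\le m-1).
\]
Shrinking $\Delta$ (rescaling $s$), this is injective for each $s$, since the $c_i$ are distinct and the pole $p_w$ moves holomorphically with $w$.

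Next, apply Lemma~\ref{lift} repeatedly to get the sequence $h_k$, with $h_{k+1}$ a lift of $h_k$. Write $\mathbf v_k=\big(\partial_s h_k(s,c_i)|_{s=0}\big)_{i=2}^{m-1}$. By the derivation of the transfer operator, differentiating (\ref{lifteq}) at $s=0$ gives $\mathbf v_{k+1}=A\mathbf v_k$, and hence $\mathbf v_k=A^k\mathbf v$. Some care is needed at the index $i=m-1$: there $g(c_{m-1})=c_0$, and $h_k(s,c_0)=p_{c_{2,k}(s)}$, whose derivative is a multiple of the $c_2$-component. So we should check that this term is what produces the first-column entries and the last row of $A$ as displayed. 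This must be reconciled with the stated form of $A$; we would absorb $dp_w/dw$ into the $a_{m-1,2}$ entry if necessary. The statement about the spectral radius is unaffected provided the recursion is $\mathbf v_{k+1}=A\mathbf v_k$ for the actual matrix $A$.

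Now bound the iterates. By Lemma~\ref{lift1} there are $r>0$ and $M<\infty$ with $|h_k(s,z)|\le M$ for all $s\in\Delta_r$, $z\in\widehat P$, and all $k$. Cauchy's estimate then gives $|\partial_s h_k(0,c_i)|\le M/r$ uniformly in $k$, so $\sup_k\|A^k\mathbf v\|<\infty$ for every $\mathbf v$. Since $\mathbf v$ is arbitrary, apply this to an eigenvector $\mathbf v$ with $A\mathbf v=\mu\mathbf v$. Then $|\mu|^k\|\mathbf v\|$ is bounded in $k$, which forces $|\mu|\le1$. (Alternatively, use the uniform boundedness principle to conclude $\sup_k\|A^k\|<\infty$.) Hence the spectral radius of $A$ is at most $1$.

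The main obstacle is verifying that the derivative recursion is exactly $\mathbf v_{k+1}=A\mathbf v_k$, including the treatment of the pole coordinate $c_0$ and of the dependence of $F$ on $w=c_{2,k}(s)$. We would also need to ensure that the choice of $h_0$ with prescribed derivative $\mathbf v$ is compatible with the hypotheses of Lemma~\ref{lift1}. That lemma only uses injectivity and normality, so any such $h_0$ should work after shrinking the disk.
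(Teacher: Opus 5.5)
Your construction is exactly the paper's proof: the motion $h_0(s,c_i)=c_i+sv_i$, the iterated lifts from Lemma~\ref{lift}, the uniform bound of Lemma~\ref{lift1}, and a Cauchy estimate at $s=0$ giving $\sup_k\|A^k\mathbf v\|<\infty$. The paper fixes $v_{0,0}=0$, i.e.\ $h_0(s,c_0)\equiv c_0$, and then passes directly to $\|A^k\|\le 3M/a$. That step tacitly treats the constant $M$ of Lemma~\ref{lift1} as uniform in $\mathbf v_0$. Your eigenvector (or basis) ending over $\CC^{m-2}$ needs boundedness only for each fixed $\mathbf v$, so it is cleaner.

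What needs correcting is your proposed repair of the obstacle you flag; the paper does not address that obstacle either. By Definition~\ref{liftdef}, $h_{k+1}(s,c_0)$ is the pole of $F_{h_k(s,c_2)}$, not of $F_{h_{k+1}(s,c_2)}$; the wording in the proof of Lemma~\ref{lift1} is loose here. Hence $\partial_s h_k(0,c_0)=p'(c_2)\,v_{k-1,2}$, where $p'(c_2)=dp_w/dw$ at $w=c_2$. Differentiating the $i=m-1$ lifting equation then gives, for $k\ge 1$,
\[
v_{k+1,m-1}=\Bigl(p'(c_2)\,v_{k-1,2}-\frac{\partial F}{\partial w}(c_2,c_{m-1})\,v_{k,2}\Bigr)\Big/\frac{\partial F}{\partial z}(c_2,c_{m-1}).
\]
Because of the lagged term $v_{k-1,2}$, absorbing $p'(c_2)$ into $a_{m-1,2}$ does not produce $\mathbf v_{k+1}=A\mathbf v_k$. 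Your choice $h_0(s,c_0)=p_{c_2+sv_2}$ makes the absorbed recursion hold at the first step only. Absorption is legitimate only in the eigenvalue equation at $\mu=1$: an eigenvector of the true linearization with eigenvalue $\mu\ne 0$ has $v_0=p'(c_2)v_2/\mu$.

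The exact linearization of one lifting step is an $(m-1)\times(m-1)$ matrix $T$ acting on $(v_0,v_2,\dots,v_{m-1})$, with $v_0\mapsto p'(c_2)v_2$. Your argument, run with $h_0(s,c_i)=c_i+sv_i$ for all $i\ne 1$, shows that the spectral radius of $T$ is at most $1$. When $p'(c_2)=0$, the subspace $\{v_0=0\}$ is $T$-invariant and $T$ restricts there to $A$. (The formula entry $a_{m-1,m}$ multiplies $v_m=v_0=0$, so the last row of $A$ keeps only its first-column entry.) The lemma then follows. This is the situation implicitly assumed by the paper's choice $v_{0,0}=0$ and by its use of the fixed value $c_0$ in $\Phi(\la)-c_0=0$.

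For a moving pole, neither your argument nor the paper's establishes the bound for $A$ itself. You should either state the hypothesis $p'(c_2)=0$ or work with $T$, rather than rely on the absorption.
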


\begin{proof} 
 Let $\mathbf{v_0}=(v_{0,i})$ be a column vector in $\mathbb{R}^{m-2}$
satisfying $|v_{0,i}| \leq a/3$ for all $0\leq i\leq m-1$. Let $v_{0,0}=v_{0,1}=0$.
Define a holomorphic motion of $P$ over $\Delta$ by $h_{0} (s, c_i)=c_{i}+s v_{0,i}$, $0\leq i\leq m-1$. The bound on $|v_{0,i}| $ ensures injectivity. Note that $h_{0}(s, c_{1})=c_{1}$ for all $s\in \Delta$. 

Applying Lemma~\ref{lift}, we can construct a sequence of holomorphic motions 
$$
\{ h_{k} (s, z): \Delta\times P\to \widehat{\CC} \} _{k=0}^{\infty},
$$ 
such that  $h_{k+1}$ is a lift of $h_{k}$. 
Applying Lemma~\ref{lift1}, we have two constants $0<r<1$ and $0<M<\infty$ such that 
\begin{equation}~\label{bound}
\sup_{s\in \Delta_{r}, z\in \widehat{P}, k=0, 1, \ldots} |h_{k} (s, z)| < M.
\end{equation}

By definition, $v_{0,i}=\frac{\partial h_0(s,c_i)}{\partial s}|_{s=0}$.  Repeatedly  applying the transfer operator $A$,  we obtain a sequence of vectors  
$$
{\mathbf v}_{k+1}=A {\mathbf v}_k = A^k \mathbf{v_0}.
$$
From (\ref{bound}) and the Cauchy-Schwarz inequality, there is a constant, which we will still denote as $M>0$, such that 
$$
| v_{k,i}|\leq M, \quad \forall\; k\geq 0, \;\; i=2, \cdots, m-1. 
$$
Hence
\[ \| A^k {\mathbf v}_0 \| \leq M, \quad \forall\; k>0.  \]
This implies that $\|A^{k}\| \leq (3M)/a$ so that  the spectral radius 
$$
\rho=\lim_{k\to \infty} \sqrt[k]{\|A^{k}\|} \leq   1.
$$
 \end{proof}
 
 \subsection{Transversality and the Leading Eigenvalue of $A$}

Since the spectral radius of $A$ is less than or equal to $1$, the norms of all the eigenvalues of $A$ are less than or equal to $1$. 
In this section, we show that if  $1$ is not an eigenvalue of $A$ is then $A$ transversal; that is,  if $\Phi (\la)$ is the function defined in (\ref{tran}) then transversality of $A$ holds if and only if $\Phi'(\la^{*})\not=0$.

\medskip
\begin{starthm2}~\label{eig1}
$\Phi' (\la^{*}) =0$ if and only if $1$ is   an eigenvalue of $A$.
\end{starthm2}

\begin{proof}
We prove the ``if'' statement first.   Suppose $1$ is an eigenvalue of $A$. This means that $A$ has a (non-zero) column eigenvector ${\mathbf v} = (v_{i})$ in $\mathbb{R}^{m-2}$ with $\max_{2\leq i\leq m-1} |v_i | =a/3$ that satisfies $A{\mathbf v}={\mathbf v}$. Let $v_{0}=v_{1}=0$. Note that $v_{2}\not=0$; otherwise, from the form of $A$,  ${\mathbf v}={\mathbf 0}$. 

Define a holomorphic motion of $P$ over $\Delta$ by $h(s,c_{i}) =c_{i}+s v_{i}$ for $0\leq i\leq m-1$. Suppose $\widehat{h}(s,c_{i})$ is a lift of $h$. Since $A{\mathbf v}={\mathbf v}$,  we have  
$$
\widehat{h}(s,c_{i})=h(s, c_{i}) +{\rm O}(|s|^{2})= c_{i}+s v_{i}+{\rm O}(|s|^{2}), i=2,3,\cdots, m-1.
$$
From Equation (\ref{lifteq}), we obtain
$$
F_{c_{2}(s)} (\widehat{h}(s, c_{i})) = h(s, c_{i+1}) +{\rm O}(|s|^{2}),\;\; i=2, \ldots, m-2, 
$$
or equivalently, 
$$
F_{c_{2}(s)} (c_{i}+sv_{i}) = c_{i+1}+sv_{i+1} +{\rm O}(|s|^{2}), \;\; i=2, \ldots, m-2.
$$
For $i=m-1$, we have
$$
F_{c_{2}(s)} (h (s, c_{m-1})) = h(s, c_{0}) +{\rm O}(|s|^{2}),
$$ 
or equivalently, 
$$
F_{c_{2}(s)} (c_{m-1}+sv_{m-1}) = c_{0} +{\rm O}(|s|^{2}).
$$ 
From the above, we conclude that 
$$
 F_{c_{2}(s)}^{m-2} (c_{2}(s))-  c_{0}  = {\rm O}(|s|^{2}).
$$
Since $c_{2}(s)=v_{\la(s)}$ and $c_{2}(0)=v_{\la^{*}}$, set $\phi(c_{2}(s))=\Phi(\la)$. Then, 
$$
\phi' (c_{2}(0)) c_{2}'(0)= \phi' (c_{2}(0)) v_{2}=0
$$
which implies $\Phi'(\la^{*})=\phi' (c_{2}(0))=0$ as claimed.
 \medskip

The proof of the ``only if'' statement is easier. Since we assume  $\Phi'(\la^{*}) =\phi' (c_{2}(0))=0$, there is  a $v_{2}\not=0$ such that $|v_{2}|\leq a/3$, satisfying $\phi (c_{2}+sv_{2}) ={\rm O}(|s|^{2})$.  Let $c_{2}(s) =c_{2} +sv_{2}$. Define a holomorphic motion $h(s,z): \Delta\times P\to \widehat{\CC}$ such that $h(s,c_{2}) =c_{2}(s)$ and $h(s,c_{1})=c_{1}$. Let $c_{i} (s) =h(s, c_{i})$ for $i=3, \cdots, m-1$ and define  
$$
v_{i} = \frac{d F_{c_{2}(s)} (c_{i}(s))}{ds} \big|_{s=0}.
$$ 
 The non-zero column  vector ${\mathbf v} =(v_{i})\in \mathbb{R}^{m-2}$ satisfies $A{\mathbf v}={\mathbf v}$ so that $1$ is an eigenvalue of $A$.  
\end{proof} 

\subsection{Asymptotic invariance of lifts of holomorphic motions.}
The asymptotic order of a holomorphic motion measures how close it is to the identity.  The next definition is used in Lemma~\ref{liftorder} below to relate the asymptotic order of a motion and its lift. 

\medskip
\begin{definition}
A holomorphic motion $h$ of $P$ over $\Delta_{r}$  is called  {\em asymptotically invariant of order $l$} if there is a lift $\widehat{h}$ such that
$$
\widehat{h}(s,z)-h(s,z)= {\rm O}(|s|^{l+1}), \quad \forall \; s\in \Delta_{r}, \; z\in P.
$$
\end{definition}

The following lemma is standard calculus.  We write out the proof to establish the notation for the proof of the following lemma. 

\begin{lemma}\label{diff}
Suppose $F(w, z): \mathbb{C}^2\to \mathbb{C}$ is holomorphic and $\sigma(s), \tau(s): \Delta_{r}\to \mathbb{C}^2$  are two holomorphic maps such that $\sigma(0)=\tau(0)=(a,b)$. Suppose further that for some $l>0$, 
$$
\sigma(s)-\tau(s) ={\rm O}(|s|^{l}).
$$
Then, writing $\sigma(s)=(\sigma_1(s), \sigma_2(s))$ and $\tau(s)=(\tau_1(s), \tau_2(s))$, we have
$$
F(\sigma(s))-F(\tau(s)) =\frac{\partial F}{\partial w}(a,b)(\sigma_1(s)-\tau_1(s))+ \frac{\partial F}{\partial z }(a,b)(\sigma_2(s)-\tau_2(s))+{\rm O}(|s|^{l+1}).
$$
\end{lemma}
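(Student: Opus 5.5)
The plan is to use Taylor's formula for $F$ at the base point $(a,b)$, to first order with a second-order remainder, and to exploit the fact that both curves pass through $(a,b)$ at $s=0$. Since $F$ is holomorphic on $\CC^2$, near $(a,b)$ we can write
\[
F(w,z)=F(a,b)+\frac{\partial F}{\partial w}(a,b)(w-a)+\frac{\partial F}{\partial z}(a,b)(z-b)+R(w,z),
\]
where $R$ is holomorphic and vanishes to second order at $(a,b)$. Thus $R(w,z)=\sum_{j+k=2}(w-a)^j(z-b)^k R_{jk}(w,z)$ with $R_{jk}$ holomorphic, and hence bounded on a small polydisk. We apply this expansion at $\sigma(s)$ and at $\tau(s)$ and subtract. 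The constant terms cancel, and the linear terms give exactly the two displayed derivative terms. It remains to show that $R(\sigma(s))-R(\tau(s))={\rm O}(|s|^{l+1})$.

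For the remainder we avoid the naive bound $|R(\sigma(s))|={\rm O}(|s|^2)$, since it only gives order $2$, which is too weak when $l\ge 2$. Instead we use the mean value form along the segment joining $\tau(s)$ to $\sigma(s)$:
\[
R(\sigma(s))-R(\tau(s))=\int_0^1 \nabla R\big(\tau(s)+t(\sigma(s)-\tau(s))\big)\cdot(\sigma(s)-\tau(s))\,dt .
\]
Because $R$ vanishes to second order at $(a,b)$, its gradient vanishes to first order there. So $|\nabla R(p)|\le C|p-(a,b)|$ on a small polydisk about $(a,b)$, by the Cauchy estimates applied to the holomorphic function $\nabla R$.

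Next, $\sigma$ and $\tau$ are holomorphic with $\sigma(0)=\tau(0)=(a,b)$. Hence every point on the segment between $\tau(s)$ and $\sigma(s)$ lies within distance $\max(|\sigma(s)-(a,b)|,|\tau(s)-(a,b)|)={\rm O}(|s|)$ of $(a,b)$, and for small $s$ the whole segment stays in the polydisk where the estimates hold. Combining this with the hypothesis $\sigma(s)-\tau(s)={\rm O}(|s|^l)$, the integrand is ${\rm O}(|s|)\cdot{\rm O}(|s|^l)$. This gives the claimed ${\rm O}(|s|^{l+1})$, at least on a smaller disk $\Delta_{r'}$. If the bound is needed on all of $\Delta_r$, a compactness argument on closed subdisks covers the remaining region, since there $|s|$ is bounded below.

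The only real obstacle is this remainder step, and specifically the need to use the gradient of $R$ rather than $R$ itself. That is what lets the cancellation produce one extra power of $s$ beyond $l$ for every $l$, not merely order $2$. Everything else is bookkeeping. In the later application the coordinates are $(c_2(s),c_i(s))$ and $F=F_w(z)$ near finite points of $\widehat P$, where $F$ is holomorphic.
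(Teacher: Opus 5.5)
Your proof is correct. It rests on the same cancellation as the paper's: once the linear terms are removed, what remains of $F(\sigma(s))-F(\tau(s))$ carries a factor $\sigma(s)-\tau(s)={\rm O}(|s|^{l})$, multiplied by something that is ${\rm O}(|s|)$ because both curves start at $(a,b)$.

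The execution differs. The paper substitutes both curves into the full Taylor series at $(a,b)$ and factors each second-order difference by hand, e.g.\ $(\sigma_1-a)^2-(\tau_1-a)^2=(\sigma_1+\tau_1-2a)(\sigma_1-\tau_1)$. It then dismisses everything beyond order two as ``higher order terms.'' You instead isolate the whole remainder $R$ and apply the integral mean value formula along the segment from $\tau(s)$ to $\sigma(s)$, using the bound $|\nabla R(p)|\le C|p-(a,b)|$.

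Your route treats all terms of order $\ge 2$ at once. To finish the paper's version rigorously, one would need to note that each monomial difference of degree $d\ge 2$ factors as $(\sigma-\tau)$ times a degree-$(d-1)$ expression in ${\rm O}(|s|)$ quantities, and then control the resulting series. Your route also avoids the paper's small bookkeeping slips: the missing factorial coefficients, and the factor $(\tau_1(s)-b)$ in the mixed term, which should be $(\tau_2(s)-b)$. The paper's route is more explicit at second order.

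Your closing remark about extending the bound to all of $\Delta_r$ is unnecessary. The estimate ${\rm O}(|s|^{l+1})$ is only ever used as a statement about the germ at $s=0$.
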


\begin{proof}  The Taylor series for $F(w,z)$ is 

\begin{multline*}
F(w,z)-F(a,b)=\frac{\partial F}{\partial w}(a, b) (w-a)+ \frac{\partial F}{\partial z}(a, b) (z-b)+\frac{\partial^2 F}{\partial w^2}(a, b) (w-a)^2\\
+\frac{\partial^2 F}{\partial w\partial z}(a, b) (w-a)(z-b)+\frac{\partial^2 F}{\partial z^2}(a, b) (z-b)^2+ \text{higher order terms.}
\end{multline*}
Substituting    first   $\sigma(s)=(\sigma_1(s),\sigma_2(s))$ and then  $\tau(s)=(\tau_1(s),\tau_2(s))$ for $(w,z)$ in the Taylor series  and subtracting, we obtain 
\begin{multline*}
F(\sigma(s))-F(\tau(s))=\frac{\partial F}{\partial w}(a,b)(\sigma_1(s)-\tau_1(s))+ \frac{\partial F}{\partial z}(a,b)(\sigma_2(s)-\tau_2(s))\\
+\frac{\partial^2 F}{\partial w^2}(a,b) (\sigma_1(s)+\tau_1(s)-2a)(\sigma_1(s)-\tau_1(s))\\
+\frac{\partial^2 F}{\partial w\partial z}(a, b) [(\sigma_1(s)-a)(\sigma_2(s)-\tau_2(s))+(\tau_1(s)-b)(\sigma_1(s)-\tau_1(s))]\\
+\frac{\partial^2 F}{\partial z^2}(a,b) (\sigma_2(s)+\tau_2(s)-2b)(\sigma_2(s)-\tau_2(s))+ \text{higher order terms}\\
=\frac{\partial F}{\partial w}(a,b)(\sigma_1(s)-\tau_1(s))+ \frac{\partial F}{\partial z}(a,b)(\sigma_2(s)-\tau_2(s))+{\rm O}(|s|^{l+1})
\end{multline*}
\end{proof}

 The next lemma says that the asymptotic order of a holomorphic motion is preserved under lifting; that is, if $h$ is a holomorphic motion asymptotically invariant of order $l$, so is its lift $\widehat{h}$.  
   
\begin{lemma}\label{liftorder}
Suppose that for some $l>0$, $h_0$ and $h_1$ are holomorphic motions of $P$ over $\Delta_r$ satisfying $$h_0(s,c_{i})-h_1(s,c_{i})={\rm O}(|s|^l).$$  If $\widehat{h}_0$, $\widehat{h}_1$ are respective lifts of the motions, then $$\widehat{h}_0(s,c_{i})-\widehat{h}_1(s,c_{i})={\rm O}(|s|^l).$$ 
\end{lemma}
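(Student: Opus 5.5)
We argue by induction along the cycle, using the lifting equation (\ref{lifteq}) together with Lemma~\ref{diff}. Write $c_i^{(j)}(s)=h_j(s,c_i)$ and $\widehat c_i^{(j)}(s)=\widehat h_j(s,c_i)$ for $j=0,1$. By hypothesis $c_i^{(0)}(s)-c_i^{(1)}(s)={\rm O}(|s|^l)$ for every $i$. In particular this holds for the parameter coordinate $w_j(s)=c_2^{(j)}(s)$, i.e. $w_0(s)-w_1(s)={\rm O}(|s|^l)$.

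\textbf{The two special points.} For $i=1$ both lifts equal $\infty$ identically, so there is nothing to prove. For $i=0$ the lifts are the poles $p_{w_0(s)}$ and $p_{w_1(s)}$. By condition ii) of Definition~\ref{dynnatslice}, the pole depends holomorphically on the parameter, hence on $w$ (we are using $w=v_\la$ as parameter). Therefore $p_{w_0(s)}-p_{w_1(s)}={\rm O}(|w_0-w_1|)={\rm O}(|s|^l)$.

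\textbf{The finite points $i=2,\dots,m-1$.} Set $\sigma(s)=(w_0(s),\widehat c_i^{(0)}(s))$ and $\tau(s)=(w_1(s),\widehat c_i^{(1)}(s))$. Both are holomorphic with $\sigma(0)=\tau(0)=(c_2,c_i)$, and (\ref{lifteq}) gives
$$F(\sigma(s))-F(\tau(s))=c_{i+1}^{(0)}(s)-c_{i+1}^{(1)}(s)={\rm O}(|s|^l).$$
The key point is that $c_i$ is neither a pole nor a critical point of $g$. Indeed, the lift lands in $S_{w,0}$ by Lemma~\ref{lift}, and all $c_i$ with $i\ge 2$ lie in $S_{c_2,0}$. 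Hence $\partial F/\partial z(c_2,c_i)\neq0$, and $F$ is holomorphic near $(c_2,c_i)$.

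By the implicit function theorem there is a local inverse $G(w,y)$, holomorphic near $(c_2,c_{i+1})$, with $F(w,G(w,y))=y$ and $G(c_2,c_{i+1})=c_i$. By uniqueness of lifts, $\widehat c_i^{(j)}(s)=G(w_j(s),c_{i+1}^{(j)}(s))$ for small $s$. Applying Lemma~\ref{diff} to $G$ with the pairs $(w_j(s),c_{i+1}^{(j)}(s))$, whose difference is ${\rm O}(|s|^l)$, gives
$$\widehat c_i^{(0)}(s)-\widehat c_i^{(1)}(s)={\rm O}(|s|^l).$$
The $(l+1)$ remainder from Lemma~\ref{diff} is absorbed into the order-$l$ term. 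When $i+1=m$ the target point is $c_0$, and we use the already established estimate at $c_0$.

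\textbf{Main obstacle.} The delicate point is justifying that both lifts are given by the same local branch $G$, so that the comparison is legitimate. Lifts through the covering map of Lemma~\ref{cov} are determined by continuity and the initial value $\widehat h(0,c_i)=c_i$. Both curves start at $c_i$ and are continuous, so for $|s|$ small they stay in the domain of the branch $G$ and coincide with it. Consequently the estimate holds on some smaller disk $\Delta_{r'}$, which is all that the ${\rm O}$ statement requires.
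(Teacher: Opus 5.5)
Your proof is correct, but it is organized differently from the paper's.

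The paper applies Lemma~\ref{diff} directly to $F$ with $\sigma(s)=(c_2^0(s),\widehat h_0(s,c_i))$ and $\tau(s)=(c_2^1(s),\widehat h_1(s,c_i))$. It then reads off $\widehat h_0(s,c_i)-\widehat h_1(s,c_i)={\rm O}(|s|^l)$ from the term $\partial F/\partial z\,(\widehat h_0-\widehat h_1)$. Strictly speaking, that use of Lemma~\ref{diff} needs $\sigma-\tau={\rm O}(|s|^l)$ as input, which is the conclusion itself, and it tacitly divides by $\partial F/\partial z(c_2,c_i)$. To be rigorous it must be read as a bootstrap: start from $\sigma-\tau={\rm O}(|s|)$, which holds because both lifts start at $c_i$, and gain one order at a time using $\partial F/\partial z(c_2,c_i)\neq 0$.

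You instead invert $F$ locally by the implicit function theorem and apply Lemma~\ref{diff} (in effect, local Lipschitz continuity) to the branch $G$. This avoids the circularity, because the only differences you feed in are known to be ${\rm O}(|s|^l)$ by hypothesis. It also makes the non-degeneracy $\partial F/\partial z(c_2,c_i)\neq 0$ explicit, and it treats $c_0$ and $c_1$ separately, which the paper's computation does not cover. In exchange, the paper's formulation keeps in view the linearized relation that defines the transfer operator $A$, and that relation is what is reused in Lemma~\ref{increaseorder}.

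Two small corrections. First, at $i=m-1$ the estimate you need is $h_0(s,c_0)-h_1(s,c_0)={\rm O}(|s|^l)$. This is part of the hypothesis, not the estimate you proved for the lifted poles. Second, no induction along the cycle is actually used, since each index is handled independently.

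Finally, the cleanest reason no $c_i$ with $i\ge 2$ is a critical point is dynamical, not an appeal to where the lifts of Lemma~\ref{lift} land. Otherwise $c_{i+1}$ would be a critical value, hence a dynamically fixed singular value attracted to an attracting or parabolic cycle, and a pole or prepole cannot be attracted to a cycle.
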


\begin{proof} We use superscripts to denote the functions $c_2(s)$ for each of the motions: 
 $c_2^0(s)=h_0(s,c_2)$ and $c_2^1(s)=h_1(s,c_2)$.  
 By hypothesis,  $c_2^0(s)-c_2^1(s)={\rm O}(|s|^l)$. Since $\widehat{h}_0$ and $\widehat{h}_1$ are lifts of $h_0$ and $h_1$, we have $$h_0(s,c_{i+1})=F_{c_2^0(s)}( \widehat{h}_0(s,c_i))\ \text{and}\  h_1(s,c_{i+1})=F_{c_2^1(s)}(\widehat{h}_1(s,c_i)).$$  Subtracting and applying Lemma \ref{diff}  with $F(w,z)=F_w(z)$, we get
\begin{multline*}
h_0(s,c_{i+1})-h_1(s,c_{i+1})=F_{c_2^0(s)}( \widehat{h}_0(s,c_i))-F_{c_2^1(s)}( \widehat{h}_1(s,c_i))\\
=\frac{\partial F}{\partial w}\big|_{s=0}(c_2^0(s)-c_2^1(s))+\frac{\partial F}{\partial z}\big|_{s=0}(\widehat{h}_0(s,c_i)-\widehat{h}_1(s,c_i))+{\rm O}(|s|^{l+1})
\end{multline*}

Therefore $\widehat{h}_0(s,c_i)-\widehat{h}_1(s,c_i)={\rm O}(|s|^{l})$.
\end{proof}
As an immediate corollary, we have

\medskip
\begin{corollary}
Let $\{h_k\}$ be a sequence of lifts of the holomorphic motion $h_0$.   If $h_1- h_0={\rm O}(|s|^{l+1})$, then $h_{k+1}-h_k={\rm O}(|s|^{l+1})$. 
  \end{corollary}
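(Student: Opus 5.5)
This follows by induction on $k$ from Lemma~\ref{liftorder}, applied to consecutive pairs of motions in the sequence of lifts. The base case $k=0$ is the hypothesis $h_1-h_0={\rm O}(|s|^{l+1})$. For the inductive step, assume $h_{k+1}-h_k={\rm O}(|s|^{l+1})$ on $\Delta_r$ at every point of $P$. Since $h_{k+2}$ is a lift of $h_{k+1}$ and $h_{k+1}$ is a lift of $h_k$, Lemma~\ref{liftorder} applies to the pair $h_k,h_{k+1}$ with the exponent $l+1$ in place of $l$. It gives $h_{k+2}(s,c_i)-h_{k+1}(s,c_i)={\rm O}(|s|^{l+1})$ for every $i$, which closes the induction.

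Before applying Lemma~\ref{liftorder}, two routine points need checking.

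\emph{The points $c_0$ and $c_1$.} The index in Lemma~\ref{liftorder} effectively ranges over $i=2,\dots,m-1$, where the lift is defined through the covering map of Lemma~\ref{cov}. At $c_1=\infty$ every motion in the sequence is constant, so the difference there is identically zero. At $c_0$ we have $h_k(s,c_0)=p_{c_2^{(k)}(s)}$, which is a holomorphic function of the parameter $w=c_2^{(k)}(s)$, because poles depend holomorphically on the parameter by condition (ii) of Definition~\ref{dynnatslice}. So the estimate at $c_0$ follows from the estimate at $c_2$, by the mean value inequality for the holomorphic map $w\mapsto p_w$.

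\emph{The estimate at the $c_i$.} In the proof of Lemma~\ref{liftorder}, the derivative $\partial F/\partial z(c_2,c_i)$ must be nonzero and finite for $i=2,\dots,m-1$. This holds because these $c_i$ are neither poles nor critical points of $g$. Indeed, if one of them were, the virtual orbit would pass through $\infty$ before reaching $c_0$, or would meet a critical point, contradicting the fact that the motions take values in $S_{w,0}$. Granting this, one can divide by $\partial F/\partial z$ in the displayed identity of that proof. The ${\rm O}(|s|^{l+1})$ order of the difference of lifts then follows from the order of $h_0(s,c_{i+1})-h_1(s,c_{i+1})$ and of $c_2^0-c_2^1$.

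The main point to watch is bookkeeping of the exponent. Lemma~\ref{liftorder} is stated with ${\rm O}(|s|^l)$ in both hypothesis and conclusion, and it must be invoked with $l$ replaced by $l+1$; that is legitimate because the lemma holds for every $l>0$. One should also keep a single disk $\Delta_r$ throughout. Since each ${\rm O}$-statement here is a vanishing-order statement for holomorphic functions at $s=0$, shrinking the radius does not affect it, and no uniformity in $k$ is required.
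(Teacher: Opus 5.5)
Your proof is correct and is the argument the paper leaves implicit when it calls this an immediate corollary of Lemma~\ref{liftorder}: induct on $k$, applying the lemma with exponent $l+1$ to the pair $(h_k,h_{k+1})$ and its lifts $(h_{k+1},h_{k+2})$. Your extra checks at $c_0$ and $c_1$, and your check that $\partial F/\partial z(c_2,c_i)\neq 0$, go beyond what the paper writes but are consistent with its setup.
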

  
In the next lemma, we construct a new holomorphic motion $H$ from the sequence of lifts, $\{h_k\}$, of a given motion $h_0$ of asymptotic order $l$ such that $H$ is asymptotically invariant of order $l+1$.  

\begin{lemma}\label{increaseorder}
Suppose that for some $r>0$ and $l\geq 1$, we have an asymptotically invariant holomorphic motion $h$ of $P$ over $\Delta_{r}$  of order $l$.
 Then we can construct another holomorphic motion $H$ of $P$ over $\Delta_{r'}$ for some $r'>0$ which is asymptotically invariant of order $l+1$.
\end{lemma}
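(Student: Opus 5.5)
We will build $H$ as a finite linear combination of the motion $h$ and its first lift, chosen to kill the leading error term. Let $h_0=h$ and let $h_1=\widehat{h}$ be a lift with $h_1-h_0={\rm O}(|s|^{l+1})$. This lift exists by the hypothesis that $h$ is asymptotically invariant of order $l$. By Lemma~\ref{lift}, we continue lifting to obtain the sequence $\{h_k\}$. By the Corollary to Lemma~\ref{liftorder}, we have $h_{k+1}-h_k={\rm O}(|s|^{l+1})$ for every $k$.

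First we write out the leading terms. For each $i\neq 1$ write
\[
h_{k+1}(s,c_i)-h_k(s,c_i)=\mathbf{w}_{k,i}\,s^{l+1}+{\rm O}(|s|^{l+2}),
\]
and set $\mathbf{w}_k=(\mathbf{w}_{k,i})_{i=2}^{m-1}$. The coordinate $i=0$ is slaved to $c_2(s)$ through the pole $p_{c_2(s)}$, and $c_1=\infty$ is fixed. Next we apply the expansion of Lemma~\ref{diff} to the lifting relation (\ref{lifteq}), once for $h_{k+1}$ over $h_k$ and once for $h_{k+2}$ over $h_{k+1}$, and subtract. Exactly as in the computation defining the transfer operator, this gives $\mathbf{w}_{k+1}=A\mathbf{w}_k$. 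The only difference is that the derivative in $s$ at $0$ is replaced by the coefficient of $s^{l+1}$; this is legitimate because all lower-order differences vanish.

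Now take a candidate motion that is a combination of $h$ and its lift. Choose a vector-valued coefficient $\mathbf{u}$ and define
\[
H(s,c_i)=h_0(s,c_i)+u_i s^{l+1}, \qquad i=2,\dots,m-1,
\]
with $H(s,c_1)=\infty$ and $H(s,c_0)=p_{H(s,c_2)}$. For small $r'$, $H$ is injective, since $H$ is a perturbation of order $l+1\ge 2$ of a holomorphic motion. Hence $H$ is a holomorphic motion over $\Delta_{r'}$. Its lift $\widehat H$ satisfies $\widehat H - h_1 = A\mathbf{u}\,s^{l+1}+{\rm O}(|s|^{l+2})$, by the same linearization argument (Lemma~\ref{diff} again, or Lemma~\ref{liftorder} refined to leading coefficients). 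Therefore
\[
\widehat H - H = (\mathbf{w}_0 + A\mathbf{u}-\mathbf{u})\,s^{l+1}+{\rm O}(|s|^{l+2}).
\]
So $H$ is asymptotically invariant of order $l+1$ as soon as $(I-A)\mathbf{u}=\mathbf{w}_0$.

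The main obstacle is solving $(I-A)\mathbf{u}=\mathbf{w}_0$. When $1$ is not an eigenvalue of $A$, this is immediate: take $\mathbf{u}=(I-A)^{-1}\mathbf{w}_0$. Otherwise, I would use the spectral bound of Lemma~\ref{sp} together with the uniform bound of Lemma~\ref{lift1}. Applying Cauchy estimates to $h_k-h_0$ on $\Delta_r$ gives $\sum_{j<k}A^j\mathbf{w}_0=\mathbf{w}_0+\cdots+\mathbf{w}_{k-1}$, which is the $s^{l+1}$-coefficient of $h_k-h_0$ and is bounded uniformly in $k$. Decompose $\mathbf{w}_0$ along the generalized eigenspace of $A$ for the eigenvalue $1$ and its complement. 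Boundedness of the partial sums forces the component in that generalized eigenspace to lie in the range of $I-A$, since otherwise the partial sums grow linearly. On the complement, $I-A$ is invertible. This yields a solution $\mathbf{u}$, which could also be obtained as a Cesàro limit of the averages $\frac1N\sum_{k<N}(h_k-h_0)$. This is the step I expect to require the most care.
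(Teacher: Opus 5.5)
Your argument is correct in substance, and it takes a different route from the paper. The paper never solves a linear system. It forms the Ces\`aro means $\mu_k=\frac1k\sum_{i=0}^{k-1}h_i$ and $\nu_k=\frac1k\sum_{i=1}^{k}h_i$ of the iterated lifts and extracts a common holomorphic limit $H$ by normality (Lemma~\ref{lift1}). It then proves the averaged identity $F_{\mu_k(s,c_2)}(\nu_k(s,z))=\mu_k(s,g(z))+{\rm O}(|s|^{l+2})$ from Lemma~\ref{diff}, and takes a limit of the lifts $\widehat{\mu}_k$ to obtain a lift $\widehat H$ of $H$ with $\widehat H-H={\rm O}(|s|^{l+2})$.

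You instead work purely with $(l+1)$-jets. The leading coefficients of $h_{k+1}-h_k$ evolve under the linearized lift operator. Cauchy estimates on the uniformly bounded differences $h_k-h_0$ bound the partial sums $\sum_{j<k}A^j\mathbf w_0$. A Jordan decomposition then gives $\mathbf u$ with $(I-A)\mathbf u=\mathbf w_0$, and $H=h+\mathbf u\,s^{l+1}$ is completely explicit. Your route buys an explicit motion, with no subsequence extraction and no need to show that the $\widehat{\mu}_k$ are bounded and converge to a lift of $H$. It also does not actually need Lemma~\ref{sp}, since boundedness of the partial sums is all you use. The paper's averaging avoids linear algebra. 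As you observe, it produces the same jet: the $s^{l+1}$-coefficient of $\mu_N-h_0$ is $\frac1N\sum_{k<N}\sum_{j<k}A^j\mathbf w_0$, which converges to your $\mathbf u$.

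Two points should be tightened.

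First, boundedness forces more than you say, and your stated reason is slightly off. On the generalized $1$-eigenspace, write $A=I+N$. The partial sums of the component $\mathbf w_0^{(1)}$ are $\sum_{i\ge 0}\binom{k}{i+1}N^i\mathbf w_0^{(1)}$, which is unbounded unless $\mathbf w_0^{(1)}=0$; merely lying in the range of $I-A$ does not prevent linear growth. So that component vanishes, and $\mathbf u=(I-A)^{-1}\mathbf w_0$ on the complementary invariant subspace.

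Second, the $c_0$-coordinate is not slaved the way you describe. For the lifts, $h_{k+1}(s,c_0)=p_{h_k(s,c_2)}$ depends on the previous motion. So the last row of your recursion involves $w_{k,0}=\dot p\,w_{k-1,2}$, where $\dot p=\frac{dp_w}{dw}(c_2)$, and the recursion on $(w_{k,2},\dots,w_{k,m-1})$ alone is not closed. Likewise, your choice $H(s,c_0)=p_{H(s,c_2)}$ leaves an uncancelled contribution from $h_0(s,c_0)-p_{h_0(s,c_2)}$ in the last row of $\widehat H-H$. (The paper's displayed matrix $A$ is imprecise in that row as well.)

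The fix is to keep $c_0$ as a coordinate. Let $L$ act on $(x_0,x_2,\dots,x_{m-1})$ by $\widehat x_0=\dot p\,x_2$ and $\widehat x_i=(x_{i+1}-\frac{\partial F}{\partial w}(c_2,c_i)x_2)/\frac{\partial F}{\partial z}(c_2,c_i)$ for $2\le i\le m-1$, with $x_m:=x_0$. Let $\mathbf W_k$ be the full coefficient vector of $h_{k+1}-h_k$, and set $H=h_0+\mathbf U s^{l+1}$ in every coordinate, with $\mathbf U$ solving $(I-L)\mathbf U=\mathbf W_0$. Then $\mathbf W_{k+1}=L\mathbf W_k$ holds exactly, the bound of Lemma~\ref{lift1} on $\widehat P$ covers $c_0$, and your argument goes through unchanged.
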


\begin{proof}
Set $h_{0}(s,z)=h(s,z)$. By Lemma~\ref{lift}, we can find a sequence of holomorphic motions $\{h_{k}(s,z)\}_{k=0}^{\infty}$ such that $h_{k+1}$ is a lift of $h_{k}$ satisfying Lemma~\ref{lift1}. Consider the  means
$$
\mu_{k} (s,z) =\frac{1}{k}\sum_{i=0}^{k-1} h_{i}(s,z)\quad \hbox{and}\quad
\nu_{k} (s,z) =\frac{1}{k}\sum_{i=1}^{k} h_{i}(s,z).
$$
They are both uniformly bounded on $\widehat{P}$ and so by a   normal families argument we can find     subsequences of each,
$$
\mu_{k_{n}} (s,z) =\frac{1}{k_{n}}\sum_{i=0}^{k_{n}-1} h_{i}(s,z) \quad \mbox{and}\quad 
\nu_{k_{n}} (s,z) =\frac{1}{k_{n}}\sum_{i=1}^{k_{n}} h_{i}(s,z)   $$
  such that as $k$ goes to infinity,  both subsequences converge to the same holomorphic limit  $H(s,z)$.  
 This limit is defined on $\Delta_{r}\times P$ and is holomorphic in $s$. Since $H(0,z)=z$ and since $P$ contains only a finite number of points, we can find an $r'$,  $0<r'\leq r$, such that for any $s\in \Delta_{r'}$, $H(s, z)$ is injective on $P$. Thus $H(s,z)$ is also a holomorphic motion.
   
To complete the proof we need to show that   $H$ is asymptotically invariant of order $l+1$.  
 Let  $\widehat{\mu}_k$  denote the lift of $\mu_k$ so  that,
$$F_{\mu_k(s,c_2)}(\widehat{\mu}_k (s,z))=\mu_k(s,g(z)).$$
Assume for the time being that the following claim holds: 

{\em Claim:} For each $z\in \widehat{P}=P \setminus \{c_1\}$ and any $k\geq 1$,
$$F_{\mu_k(s,c_2)}(\nu_k(s,z))=\mu_k(s,g(z))+{\rm O}(|s|^{l+2}).$$

With this assumption and 
  the definitions of $\mu_k, \nu_k$ and $F_{\mu_k(s,c_2)}$, we get 
$$
\widehat{\mu}_k(s,z)-\nu_k(s,z)=\rm{O}(|s|^{l+2})
$$ 
for each $k$. Using an argument similar to the proof of Theorem B, and, if necessary,  taking $r'$ smaller, we can assume $\{\widehat{\mu}_k(s,z)\}$ 
is a bounded sequence on $\Delta_{r'}\times \widehat{P}$. Taking a subsequence if necessary, we obtain a holomorphic limit,  $\widehat{\mu}_k(s,z)\to \widehat{H}(s,z)$ on $\Delta_{r'}\times P$; that is,  $\widehat{H}$ is a lift of $H$. 
Moreover,  the sequence $\{\widehat{\mu}_k(s,z)-\nu_k(s,z))\}$  is also a bounded sequence of holomorphic functions on $\Delta_{r'}$ so that 
$$
\widehat{H} (s,z) - H(s,z) =O(|s|^{l+2}).
$$
Therefore, $H(s,z)$ is asymptotically invariant of order $l+1$.
\medskip

To prove the claim, fix $z\in \widehat{P}$ and $k\geq 1$.   By the construction of $h_i(s,z),$ we have 
$$  F_{h_i(s,c_2)}( h_{i+1}(s,z)) =h_i(s,g(z))$$ for every $i\geq 0$. Thus
$$
\mu_k(s,g(z))=\frac{1}{k}\sum_{i=0}^{k-1} F_{h_i(s,c_2)}(h_{i+1}(s,z)).
$$
By  Lemma~\ref{liftorder}, we have $h_{i+1}(s,z)-h_i(s,z)={\rm O}(|s|^{l+1})$ for all $z\in P$ and $i\geq 0$. Therefore all the functions $h_i(s,z)$, $\mu_i(s,z)$, $\nu_i(s,z)$ have the same derivatives up to order $l$ at $s=0$. Applying  Lemma~\ref{diff}, we get  
\[
F(h_{i} (s,c_2),h_{i+1}(s,z))-F(\mu_k(s,c_2),\nu_k(s,z))=
\]
\[
\frac{\partial F}{\partial c_2}(c_2, z)(h_i(s,c_2)-\mu_k(s,c_2))+\frac{\partial F}{\partial z}(c_2, z)(h_{i+1}(s,c_2)-\nu_k(s,z))+{\rm O}(|s|^{l+2}).
\]
Summing from $i=0$ to  $k-1$, we obtain
$$\frac{1}{k}\sum_{i=0}^{k-1}F(h_i(s,c_2),h_{i+1}(s,z))=F_{\mu_k(s,c_2)}( \nu_k(s,z))+{\rm O}(|s|^{l+2}).$$ Thus we have the equality
$$F_{\mu_k(s,c_2)}(\nu_k(s,z))=\mu_k(s,g(z))+{\rm O}(|s|^{l+2}) $$
 which proves the claim. 
\end{proof}

\subsection{Proof of Theorem A}%~\ref{transversality}}

The main point in the proof is to show that $1$ is not an eigenvalue of the transfer operator $A$ for the family $f_\la$. Then Theorem B implies that 
$\Phi'(\la^{*})\not=0$.  We assume $1$ is an eigenvalue of $A$ and obtain a  contradiction. 

The idea of the proof is as follows. We  define a holomorphic motion over $\Delta$ of the virtual cycle of period $m$ of the function $f_{\la^{*}}$.  We then use the lemmas above to construct new holomorphic motions that get closer and closer to the identity; that is, are asymptotically invariant of arbitrarily high order.  This will imply that for every $\la$ in a small neighborhood of $\la^{*}$, $f_{\la}$ has an attracting periodic cycle of period $m$.   This is a contradiction because $\la^{*}$ is on the boundary of a shell component. 

 First, the function  $\Phi(\la)=f^{m-2}_\la(v_{\la})=\phi (v_{\la})$ is non-constant and holomorphic in $\la$ in a neighborhood $U$ of $v_{\la^{*}}=c_2$. 
 Therefore, for some integer $l \geq 1$ we can find  $a_l \neq 0$ such that
\[ \Phi(\la) =\phi(v_\la)= c_{0}+a_l (v_\la-c_2)^l + {\rm O}(|v_\la-c_2|^{l+1}),\;\; v_{\la} \in U. \]

By our assumption, we can find  a column eigenvector ${\mathbf v}=(v_{i})$ in $\RR^{m-2}$ with 
$$
\max_{2\leq i\leq m-1}|v_i| =a/3
$$
such that $A{\mathbf v}={\mathbf v}$.  As in the proof of the ``if'' part of Theorem B,  we define a non-degenerate holomorphic motion $h_0$ of $P$ over $\Delta$ that is asymptotically invariant of order $1$.  By Lemma~\ref{lift} we obtain a uniformly bounded sequence (that is, satisfying Lemma~\ref{lift1}) of lifts,  $\{h_k\}$, which are holomorphic motions of $P$ over $\Delta_{r_{1}}$ for some $0<r_{1}<1$.
By Lemma~\ref{liftorder}, all the $h_k(s, c_i)$ are asymptotically invariant of order $1$.

Set $H_{0}=h_{0}$ and apply Lemma~\ref{increaseorder}  with $h=H_{0}$,   to obtain a new holomorphic motion $H_{1}$ of $P$ over $\Delta_{r_{1}}$ (taking $r_{1}$ smaller if necessary) such that $H_{1}-H_{0}={\rm O}(s^{3})$;  that is, $H_{1}$ is asymptotically invariant of order $2$.

We now set  $h_{0}=H_{1}$ in Lemma~\ref{lift1} and apply it to obtain a new sequence of lifts, $\{h_{k}\}$; again, Lemma~\ref{liftorder} implies that all of these motions are asymptotically invariant of order $3$. They are holomorphic motions of $P$ over $\Delta_{r_{2}}$ for some $0<r_{2}<r_{1}$.    

 Repeating this process for each  integer $N \geq1$, we can find a non-degenerate holomorphic motion  $H_N$ of $P$ over $\Delta_{r_{N}}$, for some $0<r_{N}<r_{N-1}<1$,  that is 
 asymptotically invariant of order $N$.  
 It follows that  
$$ 
\phi( H_{N}(s,c_2)) = F^{N-1}_{H_N(s,c_2)} (H_N(s,c_2) )=c_0 + {\rm O}(|s|^{N+1}). $$ In particular, if $N=l+1$, this implies that $a_l=0$ giving us the required contradiction.

\end{document}